\documentclass[11pt]{article}
\RequirePackage{fix-cm}
\usepackage[utf8]{inputenc}

\usepackage{comment, amsmath, amscd, amsxtra, amssymb, amsthm, changepage, pifont, graphicx, xcolor, tikz, soul, longtable, listings, algorithm, enumitem, appendix, graphics, epsfig, epic, latexsym, booktabs, mathtools, colortbl, afterpage, stmaryrd, nomencl}
\usepackage[color]{changebar}
\usetikzlibrary{cd}

\usepackage[a4paper, total={6in, 8in}]{geometry}

\usetikzlibrary{calc, math}

\usepackage[disable]{todonotes}

\allowdisplaybreaks

\numberwithin{figure}{section}

\newtheorem{theorem}{Theorem}[section]
\newtheorem{lemma}[theorem]{Lemma}
\newtheorem{proposition}[theorem]{Proposition}

\theoremstyle{definition}
\newtheorem{definition}[theorem]{Definition}

\theoremstyle{remark}
\newtheorem{remark}[theorem]{Remark}

\DeclareMathOperator{\charac}{char}

\DeclareMathOperator{\im}{im}

\newcommand{\ZZ}{\mathbb{Z}}
\newcommand{\QQ}{\mathbb{Q}}
\newcommand{\RR}{\mathbb{R}}

\newcommand{\FF}{\mathbb{F}}

\renewcommand{\AA}{\mathbb{A}}
\renewcommand{\epsilon}{\varepsilon}
\newcommand{\PP}{\mathbb{P}}

\title{\texorpdfstring{Points of bounded height on curves and the dimension growth conjecture over $\FF_q[t]$}{Points of bounded height on curves and the dimension growth conjecture over Fq[t]}}
\author{Floris Vermeulen}
\date{24/03/2020}

\begin{document}

\maketitle

\begin{abstract}
In this article we prove several new uniform upper bounds on the number of points of bounded height on varieties over $\FF_q[t]$. For projective curves, we prove the analogue of Walsh' result with polynomial dependence on $q$ and the degree $d$ of the curve. For affine curves, this yields an improvement to bounds by Sedunova, and Cluckers, Forey and Loeser. In higher dimensions, we prove a version of dimension growth for hypersurfaces of degree $d\geq 64$, building on work by Castryck, Cluckers, Dittmann and Nguyen in characteristic zero. These bounds depend polynomially on $q$ and $d$, and it is this dependence which simplifies the treatment of the dimension growth conjecture. 
\end{abstract}

\section{Introduction}

For a tuple of integers $a=(a_1, ..., a_n)\in \ZZ^n$ recall that the height is defined to be $H(a) = \max_i |a_i|$. If $X$ is some subset of $\AA^n_\QQ$ we are interested in the counting function
\[
N_\mathrm{aff}(X; B) = \#\{a \in \ZZ^n\mid a\in X, H(a)\leq B\}.
\]
In particular, we want to find upper bounds on this quantity which are uniform with respect to the particular set $X$. Around 30 years ago, Bombieri and Pila \cite{Bombieri-Pila} pioneered the fruitful determinant method to study this counting function for various $X\subseteq \RR^2$. Their method is based on Taylor approximation. For an integral algebraic curve $C$ of degree $d$ this yields for any $\epsilon>0$ the bound
\[
N_\mathrm{aff}(C; B)\leq_{\epsilon, d}B^{\frac{1}{d}+\epsilon}.
\]
It should be noted that the implicit constant depends only on the degree of $C$, and not on $C$ itself. 

The determinant method was subsequently improved by Heath-Brown \cite{Heath-Brown-Ann}, who replaced the Taylor approximation from Bombieri and Pila by $p$-adic approximation, and later by Salberger \cite{Salberger-dgc}, who developed a global version of this $p$-adic determinant method. Heath-Brown and Salberger were able to prove results for higher dimensional varieties in this way. 

In the projective setting, if $a=(a_0: ...: a_n)$ is a point in $\PP^n_\QQ$ we may assume that the $a_i$ are coprime integers and then we define the height to be $H(a) = \max_i |a_i|$. If $X$ is a subset of $\PP^n$ then we define similarly the counting function
\[
N(X; B) = \#\{a\in \PP^n(\QQ) \mid a\in X, H(a)\leq B\}.
\]
For projective curves, Walsh \cite{Walsh} has proven that an integral projective curve $C$ in $\PP^n$ of degree $d$ satisfies
\[
N(C; B)\leq_{d,n}B^{\frac{2}{d}}.
\]
Again, the constant here only depends on the degree of $C$ and not on $C$ itself. Recently, Castryck, Cluckers, Dittmann and Nguyen \cite{CCDN} have been able to make the dependence on $d$ explicit, proving that for an irreducible projective curve $C\subseteq \PP^n$ of degree $d$ we have
\[
N(C; B)\leq_n d^4B^{\frac{2}{d}}.
\]

For a higher dimensional integral projective variety $X$, one cannot possibly hope for a bound of the form $B^{2/d}$ (or $q^{2\ell/d}$ in positive characteristic), because of the existence of linear spaces contained in $X$. Instead, Serre \cite{Serre-Mordell} and Heath-Brown \cite{Heath-Brown-cubic} formulate the dimension growth conjecture, which asserts that if $d\geq 2$ then
\[
N(X; B)\leq_{d, n, \epsilon} B^{\dim X+\epsilon}.
\]
Dimension growth is now a theorem by work of Browning, Heath-Brown and Salberger for $d\geq 6$ \cite{Brow-Heath-Salb} and later for $d\geq 4$ by work of Salberger \cite{Salberger-dgc}. For $d\geq 5$, this was recently improved to \cite{CCDN}
\[
N(X; B)\leq_{n} d^{O_n(1)}B^{\dim X}.
\]
One of the central ideas in that work is that keeping track of the polynomial dependence on $d$ heavily simplifies the dimension growth conjecture for $d\geq 16$. 

This article is concerned with the $\FF_q[t]$-analogue of these questions. Let us first set some notation. For $\alpha\in \FF_q[t]$ we define the norm of $\alpha$ to be $|\alpha| = q^{\deg\alpha}$. For a point $\alpha = (\alpha_0: ...: \alpha_n)\in \PP^{n}(\FF_q(t))$ represented by a tuple of coprime polynomials in $\FF_q[t]$ we define the height in the same fashion to be $H(\alpha) = \max_i |\alpha_i|$. Similarly, we define the logarithmic height by $h(\alpha) = \max_i \log_q|\alpha_i| = \max_i \deg \alpha_i$. Typically, it is more natural to work with this logarithmic height function. For $X$ a subset of $\PP^n_{\FF_q(t)}$ we define the counting function as
\[
N(X;\ell) = \#\{x\in \PP^{n}(\FF_q(t))\mid x\in X, h(x)<\ell \}.
\]
If $f\in \FF_q[t][x_0, ..., x_n]$ is some homogeneous polynomial, we define $N(f; \ell)=N(V(f); \ell)$.

In the affine setting, for a tuple $\alpha=(\alpha_1, ..., \alpha_n)\in \FF_q[t]^n$ we define the height as $H(\alpha) = \max_i|\alpha_i|$ and the logarithmic height as $h(\alpha) = \log_qH(\alpha)$. For $X\subseteq\AA^n_{\FF_q(t)}$ we define the affine counting function
\[
N_\mathrm{aff}(X; \ell) = \#\{x\in \FF_q[t]^n \mid x\in X, h(x)<\ell\}.
\]
Finally, if $f\in \FF_q[t][x_1, ..., x_n]$ is a polynomial then we define $N_\mathrm{aff}(f; \ell) = N_\mathrm{aff}(V(f); \ell)$.

The available techniques and hence the final results differ in small and large characteristic. Large characteristic fields behave much more like characteristic zero, and so we are able to obtain better results in this way. Let $c=\charac \FF_q$ and fix a variety $X\subseteq \PP^n$ of degree $d$, we will say that we are in \emph{small characteristic} if $c\leq d(d-1)$, in \emph{large characteristic} if $c> d(d-1)$ and in \emph{very large characteristic} if $c^{1-\epsilon}> 27d^4$ (for some fixed $\epsilon>0$). For real numbers $a,b,c$ we define the quantity $[a,b,c]$ to be $a$ in small characteristic, $b$ in large characteristic and $c$ in very large characteristic. 

Our first main result is the following upper bound on $N(f; \ell)$ for projective plane curves.

\begin{theorem}\label{thm: projective counting}
Let $f\in \FF_q[t][x_0, x_1, x_2]$ be an irreducible primitive homogeneous polynomial of degree $d$. Then
\[
N(f; \ell) \leq_\epsilon q^{1+\epsilon}d^{[8, \frac{14}{3}, 4]} q^{\frac{2(\ell-1)}{d}}.
\]
\end{theorem}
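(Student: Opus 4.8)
The plan is to adapt the $p$-adic determinant method of Heath-Brown and Salberger, in the polished form used by Castryck--Cluckers--Dittmann--Nguyen, to the function field $\FF_q[t]$. The basic strategy is: choose an auxiliary "prime" $\pi\in\FF_q[t]$ (an irreducible polynomial) of controlled degree, reduce the curve $C=V(f)$ modulo $\pi$, and observe that the rational points of bounded height $h(x)<\ell$ partition into a bounded number of residue classes in $C(\FF_q[t]/\pi)$. For each such residue class, the determinant method produces an auxiliary hypersurface $g$ of low degree, not divisible by $f$, that contains all the points of that class; then one intersects with the curve and uses B\'ezout to bound the count in each class by $O(d^2)$ or so. The total is roughly $(\#\text{residue classes})\cdot d^2$, and optimizing the degree of $\pi$ against $\ell$ yields the exponent $q^{2(\ell-1)/d}$.

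Concretely, first I would fix the monomials of degree $D$ (to be chosen) in $x_0,x_1,x_2$, giving $s\sim D^2/2$ of them, and form the $s\times s$ matrix whose rows are indexed by rational points $P$ of height $<\ell$ in a fixed residue class mod $\pi$ and whose columns are the monomials evaluated at (a primitive representative of) $P$. The key lemma is that the determinant of any such maximal minor is divisible by a large power of $\pi$: two points in the same class mod $\pi$ force congruences among the rows, and iterating (using points congruent mod $\pi^2$, $\pi^3$, \dots, as in the standard argument) one gets divisibility by $\pi^m$ with $m\sim s^2/(2\ell)$ up to lower order terms. On the other hand, each entry of the matrix has norm at most $q^{D(\ell-1)}$, so the determinant, if nonzero, has norm at most $q^{sD(\ell-1)}$; comparing with the $\pi$-divisibility and choosing $\deg\pi$ appropriately forces the determinant to vanish, i.e.\ the points of the class lie on a hypersurface of degree $D$ not containing $C$. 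Then B\'ezout over $\FF_q(t)$ bounds the points per class, and summing over the $\ll q^{(\text{something})\deg\pi}$ classes gives the bound. The powers of $q$ and the $q^{1+\epsilon}$ factor come from: the number of irreducible polynomials of a given degree (to choose a good $\pi$ avoiding bad reduction, using that the discriminant of $f$ has bounded degree), the size of the residue field $\FF_q[t]/\pi$, and summing a geometric-type series in $\ell$.

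The main obstacle I expect is twofold. First, controlling bad reduction: one must choose $\pi$ coprime to the discriminant/leading data of $f$ so that $C \bmod \pi$ is still a curve of degree $d$ over the residue field, and one needs the number of such $\pi$ of bounded degree to be large enough; bounding $\deg(\disc f)$ polynomially in $d$ (and the dependence on $q$) is where the explicit $d^{[8,14/3,4]}$ and the characteristic dichotomy enter --- in small characteristic one loses more because of inseparability/wild ramification phenomena, forcing the weaker exponent $d^8$, while in (very) large characteristic the curve behaves like one in characteristic zero and one recovers essentially the CCDN exponent. Second, making the determinant divisibility argument quantitatively sharp enough to produce the exponent $2/d$ rather than something weaker: this requires the careful bookkeeping of how many points can be congruent modulo high powers of $\pi$ (a function field analogue of the "lifting the exponent" / $p$-adic Newton polygon estimates), and then optimizing $D$ and $\deg\pi$ simultaneously. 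Packaging all the $q$-dependence so that it comes out as a clean $q^{1+\epsilon}$ prefactor, rather than something like $q^{O(1)}$, is the delicate final accounting step.
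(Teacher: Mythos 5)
Your outline follows Heath-Brown's \emph{local} determinant method (one auxiliary prime $\pi$, residue classes on $C \bmod \pi$, one auxiliary form per class, B\'ezout per class), whereas the paper runs the Salberger--Walsh \emph{global} method: the divisibility estimate of Lemma \ref{thm: estimate one prime} is summed over \emph{all} good primes up to $s^{1/n}$ (Proposition \ref{thm: lower bound on determinant}), the Thue--Siegel analogue (Theorem \ref{thm: thue-siegel}) together with a coordinate change forcing $\deg c_f \geq \log_q||f|| - d\log_q d$ feeds the size of $||f||$ into the inequality, and one obtains a \emph{single} auxiliary form $g$ of degree $\leq_\epsilon q^{1+\epsilon} d^{[7,11/3,3]} q^{2(\ell-1)/d}$ vanishing at all points of height $<\ell$ (Theorem \ref{thm: main theorem}), after which one application of B\'ezout gives the theorem. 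The distinction is not cosmetic: the stated bound is uniform in $f$ (only $q$, $d$, $\epsilon$ appear), and it is exactly this uniformity that the single-prime route fails to deliver as you have set it up. Your method needs a good prime $\pi$ with $\deg\pi$ close to $2(\ell-1)/d$ at which $f \bmod \pi$ stays absolutely irreducible of degree $d$; but the number of bad primes is controlled only in terms of $\log_q||f||$ (via Noether forms or the discriminant), and $||f||$ is unbounded in terms of $d,q,\ell$. When $\log_q||f||$ exceeds the number of primes of the optimal degree, no admissible $\pi$ exists, and enlarging $\deg\pi$ inflates the number of residue classes beyond $q^{O(1)}q^{2(\ell-1)/d}$. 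This is precisely why Heath-Brown's local method gives $B^{2/d+\epsilon}$ and why Walsh (and this paper) pass to the global method combined with a dedicated large-coefficient case: in the paper the term $||f||^{1/(nd^{1+1/n})}$ in the denominator of Theorem \ref{thm: main theorem}, produced by the Thue--Siegel step, is what beats the bad-prime factor $b(f)$ of Lemma \ref{thm: bound b(f)} when $||f||$ is huge. Your proposal contains no mechanism that exploits large coefficients, so the argument as described cannot close.

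Two smaller points. The characteristic trichotomy does not come from wild ramification: it enters through the degrees of effective Noether forms (Kaltofen's $12d^6$ in small characteristic versus Ruppert's $d^2-1$ in large characteristic) bounding $b(f)$, and through the Cafure--Matera point counts, which require $|p|\geq d^{14/3}$ (hence $\beta=\lfloor\frac{14}{3}\log_q d\rfloor$) unless the characteristic is very large; you would need the same inputs to count residue classes with the right $d$-dependence. Also, your per-class divisibility exponent ``$m\sim s^2/(2\ell)$'' is not right as stated: for $s$ points of a curve in a single residue class the exponent is of order $s^2/2$ times $\deg\pi$, independent of $\ell$; the height $\ell$ enters only through the upper bound $sD(\ell-1)$ on $\deg$ of the determinant, and it is the comparison of these two that forces $\deg\pi\gtrsim 2(\ell-1)/d$.
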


A more precise statement depending on $f$ can be deduced from Theorem \ref{thm: main theorem}. For affine curves this implies the following result, by using a technique from \cite{Ellenb-Venkatesh}.

\begin{theorem}\label{thm: affine counting}
Let $f\in \FF_q[t][x_1, x_2]$ be a primitive irreducible polynomial of degree $d$. Then 
\[
N_\mathrm{aff}(f; \ell) \leq_\epsilon q^{1+\epsilon}d^{[7,3,3]}q^{\frac{(\ell-1)}{d}} (\ell+d^{[1, \frac{5}{3}, 1]}).
\]
\end{theorem}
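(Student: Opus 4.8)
The plan is to deduce the affine bound from the projective one (Theorem~\ref{thm: projective counting}) by a standard homogenization-and-summation argument, following the technique of Ellenberg and Venkatesh \cite{Ellenb-Venkatesh}. Given a primitive irreducible $f \in \FF_q[t][x_1,x_2]$ of degree $d$, let $F \in \FF_q[t][x_0,x_1,x_2]$ be its homogenization, which is again primitive, irreducible, and of degree $d$. An affine point $(\alpha_1,\alpha_2) \in \FF_q[t]^2$ with $h(\alpha_1,\alpha_2) < \ell$ lying on $V(f)$ gives a projective point $(1:\alpha_1:\alpha_2)$ on $V(F)$; however, applying Theorem~\ref{thm: projective counting} directly with height parameter $\ell$ only yields a bound of order $q^{2\ell/d}$, which is worse than the target $q^{\ell/d}$ when $d$ is large. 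The point of the Ellenberg--Venkatesh trick is to gain a factor of two in the exponent by scaling.

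The key step is the following: for a parameter $\lambda \in \FF_q[t]$ of degree roughly $r$, multiply the coordinates of an affine point $(\alpha_1,\alpha_2)$ by $\lambda$ to get $(\lambda : \lambda\alpha_1 : \lambda\alpha_2)$, a projective point on $V(F)$. If $h(\alpha_1,\alpha_2) < \ell$, then this point has logarithmic height at most $\ell + r$, but it also has the first coordinate of degree exactly $r$, so it lies on $V(F)$ together with extra arithmetic constraints. More precisely, I would range $\lambda$ over a suitable family so that for a \emph{fixed} affine point the associated projective points have height about $\ell/2 + O(1)$ after choosing $r \approx (\ell - \text{(something)})$; summing the projective count over the $O(q^{r})$ choices of $\lambda$, and noting that each affine point is counted roughly $q^{r}$ times (once for each valid $\lambda$), the two factors of $q^{r}$ in numerator and denominator cancel and one is left with a projective-count contribution of order $q^{2(\ell/2)/d} = q^{\ell/d}$. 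The polynomial-in-$d$ factor $d^{[8,14/3,4]}$ from the projective bound gets improved slightly to $d^{[7,3,3]}$ because we are now applying the projective estimate at a smaller height where the error terms in Theorem~\ref{thm: main theorem} (the refined version) contribute less; the additive term $(\ell + d^{[1,5/3,1]})$ accounts for the number of distinct values of $\lambda$ needed, i.e.\ the range of the scaling parameter, together with boundary contributions from points with a zero coordinate.

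Concretely, the steps are: (1) homogenize $f$ to $F$ and check primitivity and irreducibility are preserved; (2) set up the scaling map $(\alpha_1,\alpha_2) \mapsto (\lambda:\lambda\alpha_1:\lambda\alpha_2)$ and identify, for each projective point $P$ on $V(F)$ lying in the relevant affine chart, the number of pairs $(\lambda, (\alpha_1,\alpha_2))$ mapping to it; (3) choose the degree range for $\lambda$ optimally — this is where the factor-two gain is extracted, balancing the height increase $\ell \mapsto \ell + r$ against the multiplicity $q^{r}$; (4) apply Theorem~\ref{thm: projective counting} (or rather the sharper Theorem~\ref{thm: main theorem}) to $F$ at the reduced height, sum over $\lambda$, and divide by the multiplicity; (5) handle separately the affine points lying on the coordinate hyperplanes $x_1 = 0$ or $x_2 = 0$, which contribute only $O_\epsilon(q^{1+\epsilon}\ell)$ since a one-variable polynomial of degree $\le d$ has few roots (and these get absorbed into the additive term).

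The main obstacle I anticipate is the bookkeeping in step (3)--(4): one must track how the characteristic-dependent exponent $[8, 14/3, 4]$ transforms under the summation and whether the error terms hidden in Theorem~\ref{thm: main theorem} remain subdominant at the reduced height parameter $\ell/2$. In particular, the reduction must be arranged so that the contribution $q^{2(\ell-1)/d}$ from the projective bound is evaluated at the halved height \emph{uniformly in $\lambda$}, which requires the projective estimate to hold with the stated polynomial-in-$q,d$ constant \emph{independently of the defining polynomial} — this is exactly the uniformity built into Theorem~\ref{thm: projective counting}, so the argument goes through, but verifying that the additive $\ell$-dependent term is genuinely $\ell + d^{[1,5/3,1]}$ and not, say, $\ell \cdot d^{[1,5/3,1]}$, needs care about which error terms multiply $\ell$ and which merely add to it.
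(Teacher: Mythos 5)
Your central mechanism does not work as stated. Multiplying the coordinates of $(1:\alpha_1:\alpha_2)$ by $\lambda$ produces the \emph{same} projective point: the height of a point of $\PP^2(\FF_q(t))$ is computed on coprime coordinates, so $(\lambda:\lambda\alpha_1:\lambda\alpha_2)=(1:\alpha_1:\alpha_2)$ has height $\max(0,\deg\alpha_1,\deg\alpha_2)$ no matter what $\lambda$ is. Consequently there is no family of ``associated projective points of height about $\ell/2$'' — an affine point with coordinates of degree up to $\ell-1$ cannot be represented projectively with height $\approx\ell/2$ — and the proposed averaging (sum the projective count over $q^{r}$ choices of $\lambda$, divide by multiplicity $q^{r}$) gains nothing: the projective count does not decompose over $\lambda$, and at best you recover the bound $q^{2(\ell-1)/d}$ you started from. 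More fundamentally, no argument that only invokes Theorem \ref{thm: projective counting} (which has no dependence on the coefficients of the form) can halve the exponent; the gain must come from the explicit $\lVert f\rVert^{-1/(nd^{1+1/n})}$ factor in Theorem \ref{thm: main theorem}.

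The paper's version of the Ellenberg--Venkatesh trick modifies the \emph{polynomial}, not the point. Writing $f=\sum_i f_i$ with $f_i$ the degree-$i$ part, one sets $F_H(x_0,x_1,x_2)=\sum_i H^{i}f_i x_0^{d-i}$ for a monic $H$; an affine zero $(x_1,x_2)$ of $f$ of height $<\ell$ gives the point $(H:x_1:x_2)$ on $F_H=0$, still of height $<\ell$ when $\deg H=\ell-1$. Choosing $H=p$ a single prime of degree $\ell-1$ with $p\nmid f_0$ (no averaging, no division by multiplicity) makes $\lVert F_p\rVert\ge q^{d(\ell-1)}\lVert f_d\rVert$, and it is the factor $\lVert F_p\rVert^{-1/d^{2}}$ in Theorem \ref{thm: main theorem} (for $n=1$) that converts $q^{2(\ell-1)/d}$ into $q^{(\ell-1)/d}$; Lemma \ref{thm: elementary bound affine} then controls $b(F_p)$ and produces the additive term $\ell+d^{[1,5/3,1]}$, which has nothing to do with a count of scaling parameters. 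One must also keep $F_p$ primitive, which forces the case analysis the paper carries out: if every prime of degree $\ell-1$ divides $f_0$, then either $\lvert f_0\rvert$ is huge (Lemma \ref{thm: affine large coefficients} gives an auxiliary polynomial directly), or $q^{\ell-1}\le d^{O(1)}$ (use $F_1$), or $f_0=0$ (shift coordinates via Lemma \ref{thm: coordinate transformation}); your step (5) about coordinate hyperplanes is not the relevant degeneracy. The conclusion is then Bézout applied to $f$ and the auxiliary polynomial, which is where the final factor $d^{[7,3,3]}$ arises. Without the coefficient-growth mechanism your outline cannot reach $q^{(\ell-1)/d}$, so the proposal has a genuine gap at its core.
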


This improves upon a result by Sedunova \cite{sedunova} which states that 
\[
N_\mathrm{aff}(f; \ell)\leq_{\epsilon, q, d}q^{\ell(\frac{1}{d}+\epsilon)},
\]
by making the constant explicit in terms of $q$ and $d$ and replacing the $\epsilon$ by a logarithmic term. Recently, Cluckers, Forey and Loeser \cite{cluckers2019uniform} have proven a similar result stating that for any $d$, there is a constant $c_d$ such that in characteristic $>c_d$ we have
\[
N_\mathrm{aff}(f; \ell)\leq_d \ell^2 q^{\left\lceil\frac{\ell}{d} \right\rceil}.
\]
The restriction on characteristic comes from the fact that their proof is based on model theoretic methods. Note that the dependence on $q$ here is at most $q^{1-1/d}$. Thus our dependence on $q$ is slightly worse, but we remove one of the logarithmic factors. 

In higher dimensions, we prove the first dimension growth result in positive characteristic. This gives an upper bound on $N(X;\ell)$ for projective hypersurfaces with polynomial dependence on $d$ and $q$. Using projection arguments similar to \cite[Sec.\,5]{CCDN} it should be possible to extend this result to all integral projective varieties and not just hypersurfaces.

\begin{theorem}\label{thm: projective dimension growth}
Let $X\subseteq \PP^n$ be an integral projective hypersurface of degree $d\geq 64$ defined over $\FF_q[t]$. Then for any $\ell\geq 1$
\[
N(X; \ell)\leq_{\epsilon, n} q^{9+\epsilon}d^{O_n(1)}q^{\ell(n-1)}.
\]
If moreover, $\ell\geq 10$ and $\frac{\ell-1}{\ell-9}\leq \frac{\sqrt{d}}{8}$ then we have the stronger bound
\[
N(X; \ell)\leq_{\epsilon, n} q^{1+\epsilon}d^{O_n(1)}q^{\ell(n-1)}.
\]
\end{theorem}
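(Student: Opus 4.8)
The plan is to follow the strategy of Castryck--Cluckers--Dittmann--Nguyen \cite{CCDN}, adapted to the function field $\FF_q[t]$, exploiting the fact that all our intermediate bounds are polynomial in both $q$ and $d$. The overall architecture is an induction on $n$, with the curve case (Theorem \ref{thm: projective counting}, or rather its refinement Theorem \ref{thm: main theorem}) serving as the base. First I would reduce to counting points on the hypersurface $X = V(f)$ of bounded height by means of the $\FF_q[t]$-analogue of Salberger's global determinant method: pick a suitable place $\mathfrak{p}$ of $\FF_q[t]$ (a monic irreducible polynomial of small degree, of size comparable to some power of $q^{\ell/d}$), reduce $X$ modulo $\mathfrak{p}$, and partition the rational points of height $<\ell$ according to which point of $X(\FF_{\mathfrak{p}})$ they reduce to. For each residue point the determinant method produces an auxiliary hypersurface of controlled degree, depending polynomially on $d$, not containing $X$, which carves out the rational points lying over that residue point. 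Summing over residue points gives $N(X;\ell) \leq q^{O(1)} d^{O_n(1)} \cdot (\text{number of lower-dimensional pieces}) \cdot q^{\ell(\dim X - 1)}$, up to the contribution of lines and low-degree curves on $X$ which must be handled separately.

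The second ingredient is the control of the auxiliary intersections. Intersecting $X$ with the auxiliary hypersurface yields a variety of dimension $\dim X - 1 = n-2$; its irreducible components have degree bounded polynomially in $d$, and one applies the inductive hypothesis to each component of dimension $\geq 1$, and the curve bound of Theorem \ref{thm: projective counting} to the components of dimension $1$. The components which are lines (or more generally low-degree rational curves) contribute $q^{O(\ell)}$ each and their number is controlled by the degree bound $d\geq 64$, exactly as in \cite[Sec.\,3--4]{CCDN}; this is where the hypothesis $d \geq 64$ enters, guaranteeing that the determinant method can be run with an auxiliary hypersurface of degree $<\sqrt{d}$ or so, which is what makes the number of ``bad'' curves manageable. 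The two regimes in the statement --- the $q^{9+\epsilon}$ bound for all $\ell\geq 1$, versus the sharper $q^{1+\epsilon}$ bound under $\frac{\ell-1}{\ell-9}\leq \frac{\sqrt d}{8}$ --- arise from the two possible choices of the place $\mathfrak{p}$: either $\mathfrak{p}$ has absolute norm a fixed power of $q$ (forcing a large power of $q$ in the constant but imposing no lower bound on $\ell$), or $\mathfrak{p}$ is chosen so that $|\mathfrak p|^{1/2}$ is roughly $q^{(\ell-1)/d}$, which needs $\ell$ large enough relative to $d$ (the inequality $\frac{\ell-1}{\ell-9}\leq\frac{\sqrt d}{8}$) but then only costs a factor $q^{1+\epsilon}$. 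One has to verify that $\FF_q[t]$ contains enough monic irreducibles of each small degree --- this follows from the prime polynomial theorem over $\FF_q$ --- so that such a $\mathfrak p$ can always be found.

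The main obstacle, I expect, is the positive-characteristic input to the determinant method: the Taylor/$p$-adic approximation step in Bombieri--Pila and Heath-Brown relies on smoothness and on derivatives behaving well, which can fail when $\charac \FF_q$ is small relative to $d$. This is precisely why the statement is uniform in $q$ with no characteristic restriction but the constants and the structure of the argument must accommodate the inseparable/wild cases --- one works with Hasse derivatives rather than ordinary derivatives, and the singular locus of the reduction $X \bmod \mathfrak p$ may be larger than in characteristic zero. Handling the contribution of singular residue points, and ensuring the auxiliary hypersurface genuinely does not contain $X$ even in small characteristic, is the delicate part; here I would lean on the function-field determinant estimates already developed for the curve case and on a Bézout-type bound for the degrees of the components of $X \cap V(\text{aux})$ valid over any field. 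Everything else --- the induction on $n$, the summation over residue classes, the bookkeeping of the exponents $d^{O_n(1)}$ and the powers of $q$ --- is routine once the determinant method is set up correctly over $\FF_q[t]$.
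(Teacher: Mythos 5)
Your sketch gestures at the right toolbox, but the mechanisms you describe are not the ones that make the proof work, and as written two of your steps would fail. First, the induction: you propose to intersect $X$ with the auxiliary hypersurface and apply the inductive hypothesis to the $(n-2)$-dimensional components. Those components are codimension-two subvarieties of $\PP^n$, not hypersurfaces, and their degrees sum to roughly $d\cdot\deg(\mathrm{aux})$, which grows like $q^{(\ell-1)/\sqrt d}$; the dimension growth statement being proved (here and in the paper) is only for hypersurfaces, and a bound of the shape $d^{O_n(1)}q^{\ell\dim}$ does not improve as the degree of a component grows, so feeding such components back into the induction is not available. The paper's induction is structured precisely to avoid this: one works affinely, chooses (via Noether forms, Lemma \ref{thm: existence good hyerplane} plus Lemma \ref{thm: coordinate transformation}) a linear form $\sum_i a_ix_i$ with small coefficients such that the generic fibre keeps an absolutely irreducible leading form, and sums over the $\approx q^{\ell}$ shifts $\sum_i a_ix_i=\alpha$, so each slice is again a hypersurface of the same degree $d$ in one variable fewer. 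The auxiliary-hypersurface-plus-components argument is used only at the base case of surfaces in $\AA^3$ (Proposition \ref{thm: dimension growth for affine surfaces}), where the components are curves and the explicit curve bounds, with their decay $q^{(\ell-1)/\deg C}$, make high-degree components harmless; lines are handled separately by counting their directions on $f_d=0$ via Theorem \ref{thm: projective counting}. No partition into residue classes modulo a chosen prime $\mathfrak p$ occurs anywhere: the auxiliary polynomial is a single global one coming from Lemma \ref{thm: main lemma affine counting}, of degree about $q^{1+\epsilon}d^{7}q^{(\ell-1)/\sqrt d}$.

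Second, your explanation of the constants is not a real mechanism. The two regimes $q^{9+\epsilon}$ versus $q^{1+\epsilon}$ do not come from two choices of a place $\mathfrak p$; they come from the high-degree curve components $C_i'$ in the surface case, whose total contribution is bounded by $q^{2+\epsilon}\bigl(\sum_i\deg C_i'\bigr)^{8}\leq q^{9+\epsilon}d^{O(1)}q^{8(\ell-1)/\sqrt d}$ (the exponent $8$, hence the $q^{9}$, traces back to the $d^{8}$ in the small-characteristic affine curve bound). The hypothesis $d\geq 64$ is exactly what makes $8(\ell-1)/\sqrt d\leq\ell-1$, and the extra hypothesis $\frac{\ell-1}{\ell-9}\leq\frac{\sqrt d}{8}$ is exactly what allows $q^{8(\ell-1)/\sqrt d}\leq q^{\ell-9}$, absorbing the $q^{9}$; it has nothing to do with an auxiliary hypersurface of degree below $\sqrt d$ or with the norm of a chosen prime. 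Finally, you omit the (easy but necessary) reduction of the projective statement to the affine one: when $f$ is absolutely irreducible one simply bounds $N(f;\ell)\leq N_{\mathrm{aff}}(f;\ell)$ in $\AA^{n+1}$, and when $f$ is irreducible but not absolutely irreducible the rational points lie on a codimension-two subvariety of degree $d^2$ and the trivial bound of Lemma \ref{thm: trivial bound affine} already gives $d^2q^{\ell(n-1)}$. Without these corrections the proposal does not assemble into a proof.
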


Our proof is based on a similar result for affine hypersurfaces. At a crucial point, we will be applying Theorem \ref{thm: affine counting} to certain auxiliary curves lying on the hypersurface. The factor $q^9$ is then naturally explained by the corresponding $d^8$ in the small characteristic case of Theorem \ref{thm: affine counting}. Unfortunately, the degrees of these curves can become very large with respect to the characteristic, and so we are unable to improve this result in large characteristic.

Our method relies on a reworked version of the determinant method in positive characteristic and is heavily based upon the techniques in \cite{CCDN}. One of the key points is that keeping track of polynomial dependence on $d$ in all bounds developed allows us to give a simplified treatment of the dimension growth conjecture for $d\geq 64$. We have decided to include full proofs of most results to keep track of the dependence on $q$ and to stress the differences.

\paragraph{Notation.} For $f\in \FF_q[t][x_0, ..., x_n]$ we define $||f||$ to be the maximum norm of the coefficients of $f$. For a positive integer $k$ we define
\[
\FF_q[t]_{<k} = \{\alpha\in \FF_q[t]\mid \deg \alpha < k\}
\]
and similarly for $\FF_q[t]_{\leq k}$. If $X$ is a variety over $\FF_q[t]$ and $p$ is a prime in $\FF_q[t]$ then we denote by $X_p$ the reduction of $X$ modulo $p$. We use the standard asymptotic notation $f=O(g)$ to mean that $|f|\leq C|g|$ for some constant $C$. If $C$ depends on certain parameters then this will be indicated by a subscript. By a prime we will always mean a monic irreducible polynomial in $\FF_q[t]$.

We will often make implicit use of the prime number theorem over $\FF_q[t]$, which states that if $N_n$ denotes the number of primes of degree $n$ in $\FF_q[t]$ then
\[
N_n = \frac{q^n}{n} + O\left(\frac{q^{n/2}}{n}\right),
\]
where the constant is absolute. The error term here is the analogue of the Riemann-hypothesis.

\paragraph{Acknowledgements.} The author would like to thank Wouter Castryck, Raf Cluckers, Arthur Forey and Kien Huu Nguyen for helpful discussions.

\section{Preliminary estimates}

\paragraph{A determinant estimate.} Let $X$ be a hypersurface in $\PP^{n+1}$ defined by a primitive absolutely irreducible homogeneous polynomial $f\in \FF_q[t][x_0, ..., x_{n+1}]$ of degree $d>1$. We need the following result, analogous to an estimate due to Salberger \cite{Salberger-dgc} which was further refined in \cite[Sec.\,2]{CCDN}. 

\begin{lemma}\label{thm: estimate one prime}
Let $p$ be a prime for which $X_p$ is absolutely irreducible and for which either $|p|\geq d^{14/3}$, or we are in very large characteristic. Let $\xi_1, ..., \xi_s$ be $\FF_q[t]$-points on $X$ and let $F_1, ..., F_s$ be homogeneous polynomials in $\FF_q[t][x_0, ..., x_{n+1}]$. Then the determinant of the matrix $(F_i(\xi_j))_{ij}$ is divisible by $p^e$ where 
\[
e\geq (n!)^{1/n}\frac{n}{n+1}\frac{s^{1+1/n}}{|p| + O_n(d^2|p|^{1/2})} - O_n(s).
\]
\end{lemma}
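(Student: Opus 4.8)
The plan is to follow Salberger's global $p$-adic determinant method, as refined in \cite[Sec.\,2]{CCDN}, and to carry it out uniformly over $\FF_q[t]$. The key mechanism is the following: since $X_p$ is absolutely irreducible of dimension $n$, a single reduction-mod-$p$ point $\bar\xi$ on $X_p$ imposes strong constraints on the $p$-adic valuations of the entries in the column of the matrix $(F_i(\xi_j))$ corresponding to any $\xi_j$ reducing to $\bar\xi$. More precisely, if many of the $\xi_j$ reduce to the same point $\bar\xi$, then after expanding $F_i$ in a local system of coordinates at $\bar\xi$ on $X_p$ (using that the local ring is regular of dimension $n$, which is where the absolute irreducibility and the hypothesis on $|p|$ — guaranteeing $X_p$ is not too singular, via a Bertini/Noether-type bound — enter), the rows of the corresponding block are forced to be $p$-adically close to a low-dimensional space of functions. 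Counting monomials of bounded degree in $n$ local variables gives that the block contributes a large power of $p$ to the determinant, of the order $\binom{m+n}{n}$-type sums, which after optimisation produces the $s^{1+1/n}$ growth.

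First I would group the points $\xi_1,\dots,\xi_s$ by their reductions modulo $p$; say there are $r$ distinct residue points, with multiplicities $m_1,\dots,m_r$ summing to $s$. For each residue class, I would choose local coordinates on $X_p$ at that point and, by a Gauss-elimination/triangulation argument on the corresponding rows, extract from the determinant a factor $p^{e_k}$ where $e_k = \sum_{j} \lfloor \text{(degree level of the $j$-th basis function)} / ? \rfloor$-type expression; concretely $e_k \geq \frac{1}{\text{something}}\sum_{t\geq 0} t\cdot(\text{number of rows assigned to level }t)$. Summing over $k$ and using convexity (Jensen) to say the worst case is when all $m_k$ are equal, $m_k \approx s/r$, gives $e \geq c_n \frac{s^{1+1/n}}{r^{1/n}} - O_n(s)$. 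Finally I would bound $r$: the number of distinct $\FF_q[t]/p$-points that can be hit is at most $\#X_p(\FF_q[t]/p) \leq \#\PP^n(\FF_{|p|}) = |p|^n + \dots$, but more carefully, via the Lang–Weil estimate for the absolutely irreducible variety $X_p$ over the finite field $\FF_q[t]/p$ (of size $|p|$), one gets $r \leq \#X_p(\FF_{|p|}) = |p|^n + O_n(d^2 |p|^{n - 1/2})$. Substituting $r^{1/n} \leq (|p|^n + O_n(d^2|p|^{n-1/2}))^{1/n} = |p| + O_n(d^2 |p|^{1/2})$ into the bound for $e$, and tracking the constant $c_n = (n!)^{1/n}\frac{n}{n+1}$ from the monomial-counting optimisation, yields exactly the claimed inequality.

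The main obstacle is making the local analysis at each residue point genuinely uniform and controlling the singularities: one needs that $X_p$ is, away from a small bad locus, smooth of dimension $n$ so that the local rings are regular and the Taylor/Hensel expansion in $n$ variables is valid, and one needs the number of $\FF_{|p|}$-points in the bad locus to be negligible (subsumed in the $O_n(s)$ and $O_n(d^2|p|^{1/2})$ error terms). This is precisely where the hypothesis "$|p| \geq d^{14/3}$ or very large characteristic" is used — it guarantees, via an effective Bertini-type irreducibility statement and a bound on the degree of the singular locus, that a generic hyperplane section is still absolutely irreducible and that inseparability phenomena in positive characteristic do not spoil the dimension count. I would quote the corresponding lemma from \cite[Sec.\,2]{CCDN} and check that its proof goes through over $\FF_q[t]/p$ verbatim, the only change being that the finite field has size $|p| = q^{\deg p}$ rather than $p$, and that all the Lang–Weil and point-counting error terms are the ones valid over an arbitrary finite field. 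The remaining steps — the combinatorial optimisation of the monomial degrees and the convexity argument over the multiplicities $m_k$ — are routine and identical to the characteristic-zero case.
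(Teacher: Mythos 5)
Your overall strategy is the same as the paper's: reduce to the determinant estimate of \cite[Sec.\,2]{CCDN} (whose proofs go through over $\FF_q[t]$ essentially verbatim) and then feed in an explicit point count for $X_p$ over the residue field of size $|p|$. Two steps of your sketch, however, would not survive as written. First, your handling of singular points has a genuine gap: you group the $\xi_j$ by residue class, derive $e \gtrsim c_n\, s^{1+1/n}/r^{1/n} - O_n(s)$ with $r$ the number of \emph{distinct} residue points, and dismiss the singular locus on the grounds that it contains few $\FF_{|p|}$-points. But the obstruction is not how many bad residue points there are --- it is that arbitrarily many of $\xi_1,\dots,\xi_s$ (possibly all $s$ of them) may reduce to a single singular point of $X_p$, where the local ring is not regular and the monomial-counting/filtration argument degrades; such columns cannot be absorbed into $O_n(s)$. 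The correct device (Salberger's, as in \cite{CCDN}) is to run the filtration with the multiplicity of $X_p$ at each residue point, so that the denominator is the number $n_p$ of $\FF_{|p|}$-points counted \emph{with multiplicity}, and then to prove separately, as in \cite[Lem.\,2.7]{CCDN}, that $n_p^{1/n}\leq |p|+O_n(d^2|p|^{1/2})$; this is exactly what the paper does.

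Second, you misidentify where the hypothesis ``$|p|\geq d^{14/3}$ or very large characteristic'' enters: it is not about effective Bertini, the singular locus, or inseparability, but about getting a point count with a \emph{polynomially explicit} error constant. Classical Lang--Weil gives an error term whose dependence on $d$ is inexplicit, which would ruin the uniformity in $d$ that the lemma requires, so your step ``$r\leq |p|^n+O_n(d^2|p|^{n-1/2})$ by Lang--Weil'' is unjustified as stated. The paper instead invokes the explicit bound of Cafure--Matera, namely that $X_p$ has at most $\frac{1}{|p|-1}\bigl(|p|^{n+1}+(d-1)(d-2)|p|^{n+1/2}+5d^{13/3}|p|^{n}-1\bigr)$ points, and the hypothesis $|p|\geq d^{14/3}$ is precisely what allows the $5d^{13/3}|p|^{n}$ term to be absorbed into $O_n(d^2|p|^{n-1/2})$ (in very large characteristic one uses \cite[Cor.\,5.6]{CafureMatera} instead). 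With these two corrections --- multiplicity bookkeeping plus the explicit Cafure--Matera input --- your outline coincides with the paper's proof.
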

\begin{proof}
This is the main result of Section 2 of \cite{CCDN}. All of the results up to Proposition 2.6 are still valid in our context with near identical proofs. This yields the estimate
\[
e\geq (n!)^{1/n}\frac{n}{n+1} \frac{s^{1+1/n}}{n_p^{1/n}} - O_n(s),
\]
where $n_p$ is the number of points on $X_p$ counted with multiplicity. By \cite[Thm.\,5.2]{CafureMatera}, we obtain that the number of points on $X_p$ without multiplicity is at most
\[
\frac{1}{|p|-1}\left(|p|^{n+1}+(d-1)(d-2)|p|^{n+1/2}+5d^{13/3}|p|^{n}-1\right).
\]
By our assumption that $|p|\geq d^{14/3}$ this quantity is bounded by $|p|^n+O_n(d^2|p|^{n-1/2})$. In very large characteristic, we may instead apply \cite[Cor.\,5.6]{CafureMatera} to obtain the same estimate. Including the multiplicity in the count can be achieved in the same way as in \cite[Lem\,2.7]{CCDN}, thus obtaining
\[
n_p^{1/n}\leq |p|+O_n(d^2|p|^{1/2}).\qedhere
\]
\end{proof}

We will apply this result to a number of primes simultaneously. 

\begin{definition}
For $f$ as above, we define
\[
b(f) = \prod_p q^{\frac{\deg p}{|p|}},
\]
where the product is over those primes $p$ for which $\deg p > \lfloor \frac{14}{3}\log_q d \rfloor$ and $f\bmod p$ is not absolutely irreducible. We will put $\beta = \lfloor \frac{14}{3}\log_q d \rfloor \in \ZZ$. In very large characteristic, we instead take the product over all primes $p$ and put $\beta=0$.
\end{definition}

\begin{lemma}\label{thm: bound b(f)}
For any $\epsilon>0$, we have 
\begin{align*}
b(f) &\leq_\epsilon q^{\epsilon} \max\{ q^{-\beta} d^{[6,2,2]} \log_q||f||, 1\}.
\end{align*}
\end{lemma}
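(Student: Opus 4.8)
The plan is to bound the number of "bad" primes — those $p$ with $\deg p > \beta$ for which $f \bmod p$ fails to be absolutely irreducible — and then to estimate the product $\prod_p q^{\deg p/|p|}$ over such $p$ using that each factor exceeds $1$ by only a small amount. The starting point is a Noether-type irreducibility result: there is a polynomial (or system of polynomials) $\Phi$ in the coefficients of $f$, with integer coefficients, whose nonvanishing characterizes absolute irreducibility, and with degree and coefficient size bounded polynomially in $d$ (uniformly in $n$, since here $n$ is fixed anyway). Concretely, over $\FF_q[t]$ one applies this to get a nonzero element $\Delta(f) \in \FF_q[t]$ with $\deg \Delta(f) \leq C d^{[6,2,2]} \log_q\|f\|$ (the exponent on $d$ being the source of the $[6,2,2]$ in the statement) such that whenever $p \nmid \Delta(f)$ and $\deg p$ is large enough (at least $\beta$, to avoid the finitely many small primes where reduction may misbehave), $f \bmod p$ stays absolutely irreducible. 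Hence every bad prime $p$ with $\deg p > \beta$ divides $\Delta(f)$.

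Next I would turn the divisibility $\prod_{p \text{ bad}} p \mid \Delta(f)$ into the desired bound. Taking logarithmic heights, $\sum_{p \text{ bad}} \deg p \leq \deg \Delta(f) \leq C d^{[6,2,2]} \log_q\|f\|$. Now
\[
\log_q b(f) = \sum_{p \text{ bad}, \deg p > \beta} \frac{\deg p}{|p|} = \sum_{p \text{ bad}, \deg p > \beta} \frac{\deg p}{q^{\deg p}} \leq q^{-\beta} \sum_{p \text{ bad}} \deg p \leq q^{-\beta} C d^{[6,2,2]} \log_q\|f\|,
\]
where I used $|p| = q^{\deg p} \geq q^{\beta}$ for every prime appearing in the product. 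This already gives $b(f) \leq q^{q^{-\beta} C d^{[6,2,2]} \log_q\|f\|}$, which is not yet of the claimed shape. To pass to the stated form one uses the elementary inequality: for $x \geq 0$ one has $q^x \leq q^\epsilon \max\{x/\epsilon', 1\}$ — more carefully, $q^x = e^{x \ln q} \leq 1 + x\ln q \cdot e^{x \ln q}$, and when $x$ is bounded (or via a cruder bound $q^x \leq_\epsilon q^\epsilon \max\{x,1\}^{O(1)}$ absorbed into $\epsilon$) one extracts $b(f) \leq_\epsilon q^\epsilon \max\{q^{-\beta} d^{[6,2,2]} \log_q\|f\|, 1\}$. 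The cleanest route is probably to observe that if the exponent $q^{-\beta} C d^{[6,2,2]}\log_q\|f\|$ is at most $1$ then $b(f) = O(1)$, absorbed into the $\max\{\cdot,1\}$ term with room to spare; and if it exceeds $1$, then $q^x \leq q^{\epsilon} x^{1/\epsilon}$ type estimates (valid for $x \geq 1$, constant depending on $\epsilon$) give the bound after replacing the implicit constant, the exponent adjustment being harmless since we only claim polynomial dependence on $d$ up to the $q^\epsilon$ slack.

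The main obstacle is the first step: producing the effective absolute-irreducibility criterion $\Delta(f)$ with the correct polynomial control on both its degree in $t$ and its behavior under reduction mod $p$, and verifying that the finitely many excluded primes are exactly those of degree $\leq \beta$. This is where one must cite or adapt an effective Noether irreducibility theorem (the positive-characteristic, $\FF_q[t]$-coefficient version — e.g. the bounds going back to Ruppert, Kaltofen, or the versions used in \cite{CCDN}), being careful that the degree bound in the coefficients contributes the $\log_q\|f\|$ factor and the degree bound in the variables contributes the $d^{[6,2,2]}$ factor, with the split between small/large/very large characteristic matching the bracket notation. Once $\Delta(f)$ is in hand, the remaining manipulation with the product over primes is routine.
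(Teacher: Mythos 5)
Your first half coincides with the paper's argument: by the effective Noether forms (Kaltofen in small characteristic, of degree at most $12d^6$; Ruppert in large characteristic, of degree $d^2-1$) one picks a form $\Phi$ with $\Phi(f)\neq 0$, every bad prime $p$ then divides $\Phi(f)\in\FF_q[t]$, and hence $\sum_{p\in\mathcal{P}}\deg p\leq \deg\Phi\cdot\log_q||f||=:\log_q c$. The genuine gap is in your second half. From $\log_q b(f)=\sum_{p\in\mathcal{P}}\deg p/|p|\leq q^{-\beta}\log_q c$ the lemma does not follow: exponentiating gives only $b(f)\leq ||f||^{\,q^{-\beta}\deg\Phi}$, i.e.\ a bound polynomial in $||f||$, whereas the lemma claims a bound that is (up to $q^{\epsilon}$) \emph{linear} in $\log_q||f||$. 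The bridging inequalities you propose, such as $q^x\leq q^{\epsilon}\max\{x,1\}^{O(1)}$ or $q^x\leq q^{\epsilon}x^{1/\epsilon}$ for $x\geq 1$ with an absolute constant, are false: an exponential is not dominated by any fixed power, and here $x=q^{-\beta}d^{[6,2,2]}\log_q||f||$ is unbounded as $||f||$ grows. So the crude estimate "$1/|p|\leq q^{-\beta}$ for every bad prime" is too lossy to yield the statement.

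The missing idea is a two-range treatment of the sum over bad primes, which is exactly what the paper does. Split at degree about $\log_q\log_q c$: for primes with $\beta<\deg p\leq \log_q\log_q c+\delta-1$ (with $\delta\in[\epsilon,\epsilon+1)$ chosen to make the endpoint an integer), drop the condition $p\in\mathcal{P}$ and use the Mertens-type estimate $\sum_{\deg p\leq k}\deg p/|p|=k+O(q^{-1/2})$, so this range contributes at most $\log_q\log_q c-\beta+O(\epsilon)+\log_q O(1)$; for the bad primes of larger degree one has $|p|\geq q^{\epsilon}\log_q c$, so by the divisibility bound their total contribution is at most $\bigl(\sum_{p\in\mathcal{P}}\deg p\bigr)/(q^{\epsilon}\log_q c)\leq q^{-\epsilon}$. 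Hence $\log_q b(f)\leq\max\{\log_q\log_q c-\beta+\epsilon,0\}+\log_q O_{\epsilon}(1)$, and exponentiating gives $b(f)\leq_{\epsilon}q^{\epsilon}\max\{q^{-\beta}\deg\Phi\cdot\log_q||f||,1\}\leq_{\epsilon}q^{\epsilon}\max\{q^{-\beta}d^{[6,2,2]}\log_q||f||,1\}$, as claimed. A minor further point: the threshold $\beta$ is not there because reduction "misbehaves" at small primes --- the Noether-form step is valid for every prime; primes of degree $\leq\beta$ are simply excluded in the definition of $b(f)$ (they are accounted for separately in Proposition \ref{thm: lower bound on determinant}).
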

\begin{proof}
We use effective results on Noether polynomials, which differ for small and large characteristic. In small characteristic, we apply \cite[Thm.\,7]{KALTOFEN1995}, stating that there is a finite set of polynomials $(\Phi_i)_i$ defined over $\ZZ$ of degree at most $12d^6$ such that for a homogeneous polynomial $F$ over any field $K$ we have that $F$ is not absolutely irreducible if and only if all of the $\Phi_i$ vanish when applied to the coefficients of $F$. In large characteristic, we may instead use \cite[Satz.\,4]{RuppertCrelle} to obtain such polynomials of degree $d^2-1$, see also \cite{GAOpdes}.

Denote by $\mathcal{P}$ the set of primes $p$ with $\deg p>\beta$ for which $f\bmod p$ is not absolutely irreducible. Since $f$ is absolutely irreducible there is a $\Phi_i$ for which $\Phi_i(f)\neq 0$, but for which $\Phi_i(f\bmod p)=0$ for all $p\in \mathcal{P}$. In particular
\[
\prod_{p\in \mathcal{P}}|p|\leq ||f||^{\deg \Phi_i} =:c.
\]
Take $\delta\in [\epsilon, \epsilon +1)$ such that $\log_q\log_qc + \delta$ is an integer. Then
\begin{align*}
\log_q b(f) &= \sum_{p\in \mathcal{P}}\frac{\deg p}{|p|}\leq \sum_{\substack{ \beta < \deg p \\ \leq \log_q\log_qc + \delta -1}} \frac{\deg p}{|p|}+\sum_{\substack{\deg p \geq \log_q\log_q c + \delta \\ p\in \mathcal{P}}} \frac{\deg p}{q^\epsilon\log_q c} \\
&\leq \max\{\log_q\log_qc + \delta - 1 - \beta, 0\}+q^{-\epsilon} \\
&\leq \max \{\log_q\log_q||f||+[6,2,2]\log_qd - \beta +\epsilon, 0\} +\log_qO_\epsilon(1),
\end{align*}
where we have used that $k - \sum_{\deg p\leq k}\frac{\deg p}{|p|}=O(q^{-1/2})=\log_qO(1)$. 
\end{proof}

\begin{proposition}\label{thm: lower bound on determinant}
Let $\xi_1, ..., \xi_s$ be $\FF_q[t]$-points on $X$ and let $F_{li}\in \FF_q[t][x_0, ..., x_{n+1}], 1\leq l\leq L, 1\leq i\leq s$ be homogeneous polynomials. Let $\Delta_l$ be the determinant of the matrix $(F_{li}(\xi_j))_{ij}$. Let $\Delta$ be the greatest common divisor of the $\Delta_l$. If $\Delta$ is non-zero then for any $\epsilon>0$
\[
\deg\Delta \geq \frac{n!^{1/n}n}{n+1}s^{1+1/n}\left(\frac{1}{n}\log_qs - 1 - \epsilon - \beta - \log_q b(f) + \log_q O_{n,  \epsilon}(1)\right).
\]
\end{proposition}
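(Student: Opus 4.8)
The plan is to combine Lemma~\ref{thm: estimate one prime} (the single-prime determinant divisibility) with Lemma~\ref{thm: bound b(f)} (the bound on $b(f)$) by summing the local contributions over a suitable range of primes. First I would fix the set $S$ of primes $p$ with $\beta < \deg p \leq k$ for a parameter $k$ to be chosen, excluding those finitely many $p$ for which $X_p$ fails to be absolutely irreducible. For each remaining $p\in S$, the hypothesis $\deg p>\beta$ guarantees $|p|\geq d^{14/3}$ (or we are in very large characteristic), so Lemma~\ref{thm: estimate one prime} applies to \emph{each} matrix $(F_{li}(\xi_j))_{ij}$ and tells us $p^{e_p}\mid \Delta_l$ with
\[
e_p \geq (n!)^{1/n}\frac{n}{n+1}\frac{s^{1+1/n}}{|p|+O_n(d^2|p|^{1/2})} - O_n(s).
\]
Crucially the lower bound on $e_p$ does not depend on $l$, so $p^{e_p}$ divides every $\Delta_l$ and hence divides their gcd $\Delta$. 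Since distinct primes are coprime, $\prod_{p\in S} p^{e_p} \mid \Delta$, so $\deg\Delta \geq \sum_{p\in S} e_p\deg p$ provided $\Delta\neq 0$.

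The next step is to estimate $\sum_{p\in S} e_p\deg p$ from below. The main term is
\[
(n!)^{1/n}\frac{n}{n+1}s^{1+1/n}\sum_{p\in S}\frac{\deg p}{|p|+O_n(d^2|p|^{1/2})},
\]
and I would like to replace the denominator $|p|+O_n(d^2|p|^{1/2})$ by just $|p|$, at the cost absorbed into $b(f)$ or a harmless $O_{n,\epsilon}(1)$ factor. Writing $\frac{1}{|p|+O_n(d^2|p|^{1/2})} = \frac{1}{|p|}(1 + O_n(d^2|p|^{-1/2}))^{-1}$, the correction is controlled because $\sum_p \frac{\deg p}{|p|^{3/2}}$ converges; since $\deg p>\beta = \lfloor\frac{14}{3}\log_q d\rfloor$, we have $|p|^{1/2} > d^{7/3}$, so $d^2|p|^{-1/2} < d^{-1/3}$ is small, and the multiplicative distortion is $1+O_n(1)$ uniformly — in fact it can be folded into the $O_{n,\epsilon}(1)$ term. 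Then by the prime number theorem over $\FF_q[t]$ (used via the identity $k-\sum_{\deg p\leq k}\frac{\deg p}{|p|}=O(q^{-1/2})$ quoted in the proof of Lemma~\ref{thm: bound b(f)}) together with the definition of $b(f)$ as the product over the \emph{bad} primes with $\deg p>\beta$, we get
\[
\sum_{\substack{\beta<\deg p\leq k\\ X_p \text{ abs.\ irred.}}}\frac{\deg p}{|p|} \;\geq\; k - \beta - \log_q b(f) - \log_q O(1).
\]

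Now I would optimize over $k$. Taking $k$ so that $q^k$ is comparable to $s^{1/n}$, i.e.\ $k = \lceil \frac{1}{n}\log_q s\rceil$, makes the sum $\sum_{p\in S}\frac{\deg p}{|p|} \geq \frac{1}{n}\log_q s - \beta - \log_q b(f) - \log_q O(1)$, while the error term $-\sum_{p\in S} O_n(s)\deg p$ is $O_n(s)\cdot O_n(s^{1/n}) = O_n(s^{1+1/n})$, which gets absorbed into the $\log_q O_{n,\epsilon}(1)$ inside the parentheses after dividing by $(n!)^{1/n}\frac{n}{n+1}s^{1+1/n}$. Collecting the main term, the $-1$ and $-\epsilon$ (the latter giving room to replace ceilings by the clean $\frac{1}{n}\log_q s$ and to absorb the convergent tail beyond $\deg p = k$), and the $-\beta - \log_q b(f)$ contributions gives exactly the claimed inequality
\[
\deg\Delta \geq \frac{n!^{1/n}n}{n+1}s^{1+1/n}\left(\frac{1}{n}\log_q s - 1 - \epsilon - \beta - \log_q b(f) + \log_q O_{n,\epsilon}(1)\right).
\]

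The main obstacle I anticipate is bookkeeping rather than any deep difficulty: one must be careful that the $-O_n(s)$ term in Lemma~\ref{thm: estimate one prime}, which is summed over roughly $q^k/k \sim s^{1/n}/\log_q s$ primes, genuinely contributes only $O_n(s^{1+1/n})$ and hence a bounded additive constant after normalization — this is why the PNT error term and the range $k\approx\frac1n\log_q s$ are chosen as they are. A secondary point of care is that the hypothesis of Lemma~\ref{thm: estimate one prime} requires \emph{either} $|p|\geq d^{14/3}$ \emph{or} very large characteristic, and the choice $\beta = \lfloor\frac{14}{3}\log_q d\rfloor$ is precisely designed so that $\deg p > \beta$ forces $|p|\geq d^{14/3}$; in very large characteristic one instead runs the argument with $\beta = 0$ over all primes, and the two cases are uniformized by the definition of $b(f)$. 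Finally one should note the $\Delta\neq 0$ hypothesis is exactly what lets us pass from divisibility to a degree inequality, since $\deg$ of a nonzero polynomial is additive under products.
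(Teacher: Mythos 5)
Your overall strategy is the same as the paper's: apply Lemma \ref{thm: estimate one prime} prime by prime (noting the lower bound on the exponent is independent of $l$, so the prime powers divide the gcd $\Delta$), sum $e_p\deg p$ over a range of primes of size roughly $s^{1/n}$, and convert the sums using the prime number theorem over $\FF_q[t]$ together with the definition of $b(f)$. The genuine gap is in your final absorption step, and it concerns precisely the uniformity in $q$ that the proposition asserts. With your cutoff $k=\lceil\frac1n\log_q s\rceil$, the accumulated error coming from the $-O_n(s)$ term of Lemma \ref{thm: estimate one prime} is
\[
O_n(s)\sum_{\deg p\le k}\deg p \;=\; O_n(s)\left(\tfrac{q}{q-1}q^{k}+O(q^{k/2})\right),
\]
and since $q^{k}$ can be as large as $q\,s^{1/n}$, dividing by $\frac{n!^{1/n}n}{n+1}s^{1+1/n}$ leaves an error of size $O_n(q)$ inside the parentheses; even ignoring the ceiling it would still be $O_n(1)$. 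Neither can be ``absorbed into $\log_q O_{n,\epsilon}(1)$'': that quantity is $\ln C_{n,\epsilon}/\ln q$, which tends to $0$ as $q\to\infty$, while your error does not. This is not cosmetic: in the proof of Theorem \ref{thm: main theorem} these parenthetical errors are exponentiated with base $q$ and must yield $O_{n,\epsilon}(1)$ factors rather than $q^{O_n(1)}$ factors, which is the whole point of the $q^{1+\epsilon}$-type bounds.

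The paper's proof avoids this by truncating the prime range at $|p|\le q^{-\delta}s^{1/n}$ with $\delta\in[\epsilon,1+\epsilon)$ (chosen so that $\frac1n\log_q s-\delta$ is an integer): then $\sum_{|p|\le q^{-\delta}s^{1/n}}\deg p=\frac{q}{q-1}q^{-\delta}s^{1/n}+O(q^{-\delta/2}s^{1/(2n)})=s^{1/n}\log_qO_{n,\epsilon}(1)$, because $q^{-\epsilon}\ln q=O_\epsilon(1)$, and the loss of $\delta\le 1+\epsilon$ in the main term is exactly the $-1-\epsilon$ appearing in the statement. In other words, that slack is needed to shrink the range of primes so the per-prime $O_n(s)$ losses stay within $s^{1+1/n}\cdot O(1/\log q)$, not (as you use it) to tidy up ceilings in the main term, where your choice of $k$ in fact loses nothing. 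A smaller imprecision: describing the replacement of $|p|+O_n(d^2|p|^{1/2})$ by $|p|$ as a ``multiplicative distortion $1+O_n(1)$'' is too weak; what is needed, and what your observation $d^2|p|^{-1/2}<d^{-1/3}$ for $\deg p>\beta$ does give after summing the convergent series, is that the additive correction $O_n(d^2)\sum_{\deg p>\beta}\deg p/|p|^{3/2}$ is again $\log_qO_n(1)$, as in the paper. With the truncation corrected, your argument becomes the paper's proof.
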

\begin{proof}
Denote by $\mathcal{P}$ the collection of primes $p$ such that either $\deg p\leq \beta$ or $f\mod p$ is not absolutely irreducible. Fix $\delta\in [\epsilon, 1+\epsilon)$ such that $\frac{1}{n}\log_q s - \delta$ is an integer. Applying Lemma \ref{thm: estimate one prime} to all primes $p$ for which $|p|\leq s^{1/n}/q^\delta$ not in $\mathcal{P}$ yields that
\[
\deg \Delta \geq \frac{n!^{1/n}n}{n+1}s^{1+1/n}\sum_{\substack{|p|\leq q^{-\delta}s^{1/n} \\ p\not\in \mathcal{P}}}\frac{\deg p}{|p| + O_n(d^2|p|^{1/2})} - O_n(s)\sum_{|p|\leq q^{-\delta}s^{1/n}} \deg p.
\]
We first estimate the last sum. Since $\sum_{\deg p\leq k}\deg p=\frac{q}{q-1}q^k+O(q^{k/2})$ we have
\begin{align*}
\sum_{|p|\leq q^{-\delta}s^{1/n}}\deg p &= \frac{q}{q-1}s^{1/n}q^{-\delta} + O(\sqrt{s^{1/n}}q^{\frac{-1}{2}\delta}) \\
    &= s^{1/n}\log_qO_{n,\epsilon}(1).
\end{align*}
For the main term we can use that $\frac{1}{|p|+O(d^2|p|^{1/2})} \geq \frac{1}{|p|}-O_n(d^2)\frac{1}{|p|^{3/2}}$. Noting that $\sum_{\deg p\leq k}\frac{\deg p}{|p|} = k+\log_qO(1)$ we obtain
\begin{align*}
\sum_{\substack{|p|\leq q^{-\delta}s^{1/n} \\ p\notin \mathcal{P}}}& \frac{\deg p}{|p|+d^2|p|^{1/2}} \geq \sum_{|p|\leq q^{-\delta}s^{1/n}} \frac{\deg p}{|p|} - \sum_{p\in \mathcal{P}}\frac{\deg p}{|p|} - O_n(d^2)\sum_{\deg p > \beta }\frac{\deg p}{|p|^{3/2}} \\
    &\geq \frac{1}{n}\log_qs - \delta - \sum_{\deg p\leq \beta }\frac{\deg p}{|p|}-\log_qb(f)-\log_qO_n(1)\\
    &\geq \frac{1}{n}\log_qs - 1 - \epsilon - \log_qO_n(1) - \beta - \log_qb(f),
\end{align*}
where we have used that 
\begin{align*}
O_n(d^2)\sum_{\deg p>\beta}\frac{\deg p} {|p|^{3/2}} = d^2 O_n(q^{-\frac{1}{2}(\beta+1)})=O_n(q^{-1/2})=\log_qO_n(1).
\end{align*}
This proves the proposition.

\end{proof}

\paragraph{The Thue-Siegel lemma.} We have the following analogue of the improvement to the classical Thue-Siegel lemma, by Bombieri and Vaaler \cite{Bombi-Vaal}.

\begin{theorem}\label{thm: thue-siegel}
Let $A$ be an $s\times r$ matrix over $\FF_q[t]$ with $r>s$. Assume that $A$ has full rank $s$ and put $N=\max_{ij}\deg A_{ij}$. Then there is a non-zero $\FF_q[t]$-solution $x=(x_1, ..., x_r)$ to the system $AX=0$ with
\[
\max_{ij}\deg x_i\leq \frac{sN-\deg D}{r-s},
\]
where $D$ is the greatest common divisor of all $s\times s$ subminors of $A$.
\end{theorem}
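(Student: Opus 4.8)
The plan is to mimic the classical Bombieri--Vaaler argument over $\QQ$, replacing the usual archimedean/adelic volume counts by the corresponding counts over $\FF_q[t]$, where the relevant ``volume'' of a box of polynomials of degree $< k$ is $q^k$ (a self-dual local measure at the place at infinity). Concretely, let $A$ be the given $s \times r$ matrix of full rank $s$ with entries of degree $\leq N$, and let $D$ be the $\gcd$ of all $s \times s$ subminors. The solution lattice $\Lambda = \ker(A) \cap \FF_q[t]^r$ is a free $\FF_q[t]$-module of rank $r - s$, and the key point is to compute (or bound) its ``covolume'' at infinity, i.e.\ the degree of a generator of $\bigwedge^{r-s} \Lambda$ inside $\bigwedge^{r-s} \FF_q(t)^r$. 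The first step is the duality/exact-sequence computation: from $0 \to \Lambda \to \FF_q[t]^r \xrightarrow{A} \im(A) \to 0$ and the fact that $\im(A)$ has ``covolume'' essentially $\deg D$ (the $\gcd$ of the maximal minors measures exactly the index of $\im A$ inside its saturation, and the saturation has trivial covolume), one gets that the covolume of $\Lambda$ equals $\deg D$ up to the trivial normalisation. So $\deg\left(\textrm{generator of }\bigwedge^{r-s}\Lambda\right) = \deg D$.

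Next I would invoke the function-field analogue of Minkowski's second theorem — in this setting it is an \emph{equality}, due to the valuation being discrete and the ``lattice'' being a module over a PID with a nice local structure (this is classical, going back to Mahler; for $\FF_q[t]$ one can take a reduced/Hermite-type basis of $\Lambda$ at the place at infinity). Thus $\Lambda$ has an $\FF_q[t]$-basis $v_1, \dots, v_{r-s}$ with $\sum_i \deg_\infty(v_i) = \deg D$ exactly, where $\deg_\infty(v) = \max_j \deg v_j$ is the height we are bounding. In particular the smallest basis vector $v_1$ satisfies
\[
\deg_\infty(v_1) \leq \frac{\deg D}{r-s}.
\]
This is \emph{not} yet the claimed bound, because the entries of $A$ have not entered; one must relate $\deg D$ to $s$, $N$, $r$. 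Here I use that $D$ divides any single $s \times s$ minor, hence $\deg D \leq \max(\text{degree of an } s\times s \text{ minor}) \leq sN$. Combining, $\deg_\infty(v_1) \leq (sN - \deg D)/(r-s) + \deg D/(r-s)$ — wait, more directly: replace the covolume identity by the inequality $\deg_\infty(v_1) \le (\text{covol}(\Lambda) )/(r-s)$, and then bound $\text{covol}(\Lambda)$. Actually the cleanest route is: the covolume of $\Lambda$ equals $\deg D' $ where $D'$ is the content of the maximal minors of a matrix whose \emph{rows} span $\Lambda$; by duality this is $(\text{content of maximal minors of a complementary matrix}) $, and a direct Laplace-type estimate gives covol$(\Lambda) \le sN - \deg D$. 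Taking $v = v_1$ then yields $\deg_\infty(v) \le (sN - \deg D)/(r-s)$, as claimed.

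The step I expect to be the main obstacle is making the ``covolume $= \deg D$, and covolume of the solution module $= sN - \deg D$'' bookkeeping precise and sign-correct — i.e.\ the exact analogue of the adelic computation $\mathrm{vol}(\ker) \cdot \mathrm{vol}(\im) = \mathrm{vol}(\FF_q[t]^r) = 1$ together with $\mathrm{vol}(\im A)^{-1} = \text{(content of maximal minors)}$. Over $\QQ$ this is where Bombieri--Vaaler use Siegel's lemma with the adelic Minkowski theorem; over $\FF_q[t]$ everything becomes an identity at the single place at infinity, so the argument is actually \emph{easier} (no error terms, no $O(1)$'s), but one must be careful that ``degree'' plays the role of ``$-\log$ of a norm'' so inequalities flip, and that the exact-sequence argument correctly identifies the $\gcd$ $D$ with the relevant module index. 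Once the covolume identity is pinned down, extracting the shortest basis vector is immediate from the function-field Minkowski equality and gives the stated bound with no loss.
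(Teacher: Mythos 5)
Your overall plan (compute the height of the kernel lattice via Plücker duality with the row space of $A$, then extract a short vector from the exact function-field analogue of Minkowski's second theorem) can be made to work, but the step you yourself flag as the main obstacle --- the covolume bookkeeping --- is wrong in both of the formulations you give, so as written the argument does not go through. The covolume of $\Lambda=\ker A\cap\FF_q[t]^r$ is not $\deg D$: for $s=1$, $r=2$, $A=(t^5,\,1)$ one has $D=1$, yet $\Lambda$ is generated by $(1,-t^5)$ and its shortest nonzero vector has degree $5=sN-\deg D$, not $0$. Nor is the covolume ``the content of the maximal minors of a matrix whose rows span $\Lambda$'': since $\Lambda$ is saturated in $\FF_q[t]^r$, the maximal minors of any basis matrix $B$ of $\Lambda$ are coprime, so that content is always trivial. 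The quantity that enters the Minkowski-type equality for the sup-degree norm at infinity is the \emph{maximal degree} of the Plücker coordinates of $B$, not their gcd, and the exact-sequence heuristic ``$\mathrm{vol}(\ker)\cdot\mathrm{vol}(\im)=1$'' does not apply, because the covolumes involved are heights of non-full-rank lattices at the place at infinity, not module indices. The correct chain you need is: by Grassmannian duality the Plücker vector of $\ker A$ is proportional to the vector of signed complementary $s\times s$ minors of $A$; since $\Lambda$ is saturated, the Plücker vector of $B$ is a primitive representative, hence equals that vector divided by its content $D$ up to a unit; therefore $\max_J\deg p_J(B)=\max_I\deg p_I(A)-\deg D\le sN-\deg D$, and a reduced basis then gives a nonzero $x\in\Lambda$ with $\max_i\deg x_i\le (sN-\deg D)/(r-s)$. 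Until this identification is stated and proved correctly (together with a precise reference or proof of the reduced-basis equality over $\FF_q[t]$), the proposal is a plausible outline rather than a proof.

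You should also be aware that the paper's own proof is far more elementary and avoids all of this machinery: it maps the box $\FF_q[t]_{<k}^r$ (of size $q^{rk}$) by $A$ into the box of vectors of degree $<N+k$ in $\FF_q[t]^s$, observes via the Hermite normal form of $\im A$ that only a proportion $|D|^{-1}$ of that target box can be hit, and then applies the pigeonhole principle with $k=\left\lceil (sN-\deg D)/(r-s)\right\rceil$; the difference of two colliding points is the desired kernel vector. So even once repaired, your route buys nothing extra for this statement (it would give a full short basis rather than one short vector, which the paper does not need) at the cost of substantially heavier input.
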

\begin{proof}
Denote by $A_i$ the $i$-th component of the linear map $A: \FF_q[t]^r\to \FF_q[t]^s$. If $x=(x_1, ..., x_r)\in\FF_q[t]^r$ satisfies $\deg x_i< k$ for all $i$ and for some integer $k$ then $\deg A_i(x_1, ..., x_r)< N+k$. In particular we see that 
\[
A(\FF_q[t]_{<k}^r) \subseteq \{(y_1, ..., y_s)\mid \deg y_i< N+k\}.
\]
Transforming the lattice $\im A$ in Hermite normal form does not alter $\deg D$, and so we see that only a proportion of $|D|^{-1}$ elements of the right hand side are actually reached by $A$. Take $k=\left\lceil \frac{sN-\deg D}{r-s}\right\rceil$ and apply the pigeon hole principle to conclude.
\end{proof}

\section{Points of bounded height on curves}

\paragraph{The main estimate.} Fix a primitive irreducible polynomial $f$ in $\FF_q[t][x_0, ..., x_{n+1}]$. By the first paragraph of \cite[Sec\,4]{Walsh} we may assume that $f$ is absolutely irreducible. We make one further restriction. Denote by $c_f$ the coefficient of the monomial $x^d_0$ in $f$. We will assume that
\[
\deg c_f\geq \log_q||f|| - d\log_qd.
\]
Later we will show that any $f$ can be brought in such a form by an appropriate coordinate transformation.

\begin{theorem}\label{thm: main theorem}
Let $\ell$ be an integer. Then there exists a homogeneous polynomial $g$ not divisible by $f$, vanishing at all $\FF_q[t]$-points of $f$ of logarithmic height less than $\ell$, and of degree
\[
M \leq_{\epsilon,n} q^{1+\epsilon} q^{\frac{n+1}{nd^{1/n}}(\ell-1)} \frac{q^{\beta} d^{-1/n} b(f)} {||f||^{n^{-1}d^{-1-1/n}}} + q^{\epsilon}d^{1-1/n}(\ell-1) + q^{1+\epsilon}d^{\left[ 7, \frac{14}{3}-\frac{1}{n}, 3\right]}.
\]
\end{theorem}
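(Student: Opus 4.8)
The plan is to run the global $p$-adic determinant method, following Salberger and the refinements in \cite{CCDN}, but carefully tracking the dependence on $q$. First I would fix $\ell$ and let $S$ be the set of $\FF_q[t]$-points on $V(f)$ of logarithmic height less than $\ell$; there are at most $q^{(n+1)\ell}$ of them (after suitable normalization of representatives, and using that the relevant coordinate, say $x_0$, can be taken of controlled degree thanks to the hypothesis $\deg c_f \geq \log_q\|f\| - d\log_q d$). The idea is to find a single homogeneous polynomial $g$ of degree $M$, not divisible by $f$, vanishing on all of $S$. Such a $g$ exists as a nontrivial solution of a linear system: writing $g = \sum_{|\alpha| = M} c_\alpha x^\alpha$ with unknown coefficients $c_\alpha \in \FF_q[t]$, the conditions $g(\xi_j) = 0$ for $\xi_j \in S$ form an $|S| \times r$ linear system where $r = \binom{M+n+1}{n+1}$ is the number of monomials. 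One has to also impose that $g$ is not a multiple of $f$; this is arranged by restricting to monomials not divisible by $x_0^d$ (or a similar device), which is harmless for $M$ of the right size since it only changes constants.

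The key point is to apply the Thue–Siegel lemma (Theorem \ref{thm: thue-siegel}) to this linear system to bound the height of the coefficients of $g$, and crucially to use the lower bound on the gcd of the maximal subdeterminants provided by Proposition \ref{thm: lower bound on determinant}. Concretely: I would choose $M$ so that $r = \binom{M+n+1}{n+1}$ is comparable to (a bit larger than) $s := |S|$, i.e. $r \approx \frac{M^{n+1}}{(n+1)!}$ and $s \leq q^{(n+1)\ell}$, which forces $M$ of order roughly $q^{\ell}$ up to the refinement below. The entries of the system matrix $A$ have degree at most $N := Md\,h(\xi)$-type quantities — more precisely, evaluating a monomial of degree $M$ at a point of logarithmic height $< \ell$ gives degree $< M\ell$ after clearing denominators, but one gains by working projectively and by the normalization of $x_0$. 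The Thue–Siegel bound then reads
\[
\deg g \leq \frac{sN - \deg D}{r - s},
\]
and here $\deg D$ is exactly the quantity bounded below in Proposition \ref{thm: lower bound on determinant}: applying that proposition with $L$ running over a spanning set of size-$s$ submatrices gives $\deg D \gtrsim \frac{n!^{1/n}n}{n+1} s^{1+1/n}\big(\tfrac1n\log_q s - 1 - \epsilon - \beta - \log_q b(f) + \log_q O_{n,\epsilon}(1)\big)$. Plugging $s \approx q^{(n+1)\ell}$ into $s^{1/n}$ produces the gain: $\deg D$ saves a factor of size $s^{1/n} \approx q^{\frac{n+1}{n}\ell}$ against the trivial bound $sN/r$, and after dividing by $r - s \approx s$ one is left with $\deg g$ of order $\frac{N}{s^{1/n}} \cdot(\text{stuff})$, which after substituting $N$ and optimizing $M$ collapses to the claimed $M \leq_{\epsilon,n} q^{1+\epsilon} q^{\frac{n+1}{nd^{1/n}}(\ell-1)} \cdot \frac{q^\beta d^{-1/n} b(f)}{\|f\|^{n^{-1}d^{-1-1/n}}}$, with the additive terms $q^\epsilon d^{1-1/n}(\ell-1)$ and $q^{1+\epsilon} d^{[7,\frac{14}{3}-1/n,3]}$ absorbing the lower-order contributions (the $\log_q s$ term, the $\log_q b(f)$ and $\beta$ shifts, and the requirement from Lemma \ref{thm: estimate one prime} that there be enough primes with $|p| \geq d^{14/3}$ available, which forces $M$ — equivalently $s^{1/n}$ — to exceed roughly $d^{14/3}$).

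The main obstacle, and where the bulk of the bookkeeping lies, is the optimization of $M$: the degree $r$ of free monomials depends on $M$, the bound $N$ on the matrix entries depends on $M$, and the gcd lower bound depends on $s$ which in turn must be $\lesssim r$, so $M$ appears on both sides and one has to solve the resulting inequality self-consistently, choosing $M$ as small as possible subject to $r > s$ and subject to $s^{1/n}/q^\delta$ exceeding the threshold $d^{14/3}$ of Lemma \ref{thm: estimate one prime}. This is exactly the place where the exponent $\frac{n+1}{nd^{1/n}}$ emerges (from $s^{1/n}$ where $s \approx M^{n+1}$ and $M \approx q^{\ell/d^{1/n}}$-ish after balancing) and where the characteristic-dependent exponent $[7, \frac{14}{3}-\frac1n, 3]$ in the final additive term is dictated by the hypotheses of Lemma \ref{thm: estimate one prime} (needing $|p| \geq d^{14/3}$, contributing $d^{14/3-1/n}$ after the $1/n$-th power in the determinant estimate, versus $27 d^4$ in very large characteristic, versus the worse $d^7$ needed in small characteristic to control $b(f)$ via Lemma \ref{thm: bound b(f)} together with the normalization $\deg c_f \geq \log_q\|f\| - d\log_q d$). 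I would carry out the normalization-reduction (bringing $f$ to the form with large $\deg c_f$) either here or defer it, as the statement allows, and then the proof is: (1) count $S$; (2) set up the linear system on coefficients of $g$ avoiding $x_0^d \mid g$; (3) bound matrix entry degrees $N$; (4) invoke Proposition \ref{thm: lower bound on determinant} for $\deg D$; (5) apply Theorem \ref{thm: thue-siegel}; (6) choose and optimize $M$; (7) verify $f \nmid g$ from the avoided-monomial constraint.
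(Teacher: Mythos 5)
There is a genuine gap, and it lies in the basic set-up of the linear algebra. In the actual argument the quantity $s$ is \emph{not} $|S|$: one takes $M$ extremal, i.e.\ assumes that every degree-$M$ form vanishing on $S$ is divisible by $f$, and then picks a maximal subset $T\subseteq S$ on which the evaluation matrix has full rank. Under the extremal assumption the solution space of that system is exactly $f\cdot\mathcal{B}(M-d)$, so $s=|T|=|\mathcal{B}(M)|-|\mathcal{B}(M-d)|\approx dM^n/n!$. Your choice $s=|S|\leq q^{(n+1)\ell}$ breaks the argument twice over: since all points of $S$ lie on $f=0$, the rank of the evaluation matrix is at most $|\mathcal{B}(M)|-|\mathcal{B}(M-d)|$, so an $|S|\times r$ matrix of full rank $|S|$ simply does not exist in the relevant range, and neither Theorem \ref{thm: thue-siegel} nor Proposition \ref{thm: lower bound on determinant} applies as you invoke them; moreover the identification $s\approx dM^n/n!$ (not $s\approx M^{n+1}$, and certainly not $q^{(n+1)\ell}$) is precisely what makes $s^{1/n}\approx d^{1/n}M/(n!)^{1/n}$ and hence produces the exponent $\frac{n+1}{nd^{1/n}}$ and the factors $d^{-1/n}$ and $||f||^{-1/(nd^{1+1/n})}$ when the two determinant bounds are compared. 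The count of points of bounded height never enters the proof at all.

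Second, Theorem \ref{thm: thue-siegel} bounds the $t$-degrees of the \emph{coefficients} of a solution, not the degree $M$ of the form, so in your setting — where you have excluded multiples of $f$ from the outset by forbidding monomials divisible by $x_0^d$ — a small-coefficient solution buys nothing: the existence of your $g$ is then governed by a naive rank count, which only gives $M$ of order $q^{\frac{n+1}{n}\ell}$, far from the claimed $q^{\frac{n+1}{nd^{1/n}}(\ell-1)}$. The mechanism you are missing is the converse use of Thue--Siegel: by the extremal choice of $M$, \emph{every} solution is a multiple of the normalized $f$ and hence has a coefficient of degree at least $\log_q||f||-d\log_q d$ (this, not point counting, is the role of the hypothesis $\deg c_f\geq \log_q||f||-d\log_q d$); therefore the Thue--Siegel quantity $(sM(\ell-1)-\deg\Delta)/(r-s)$ must exceed $\log_q||f||-d\log_q d$, which yields an \emph{upper} bound on $\deg\Delta$. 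Playing this against the lower bound of Proposition \ref{thm: lower bound on determinant}, together with the estimates $s\approx dM^n/n!$, $(r-s)/(Ms)\approx 1/(d(n+1))$, forces $\log_q M\leq \frac{n+1}{nd^{1/n}}(\ell-1)+\beta+\log_q b(f)-\frac{\log_q||f||}{nd^{1+1/n}}+\cdots$, and a separate case analysis according to whether $\log_q||f||$ is larger or smaller than about $4(n+1)d(\ell-1)$ (using Lemma \ref{thm: bound b(f)}) produces the additive term $q^{1+\epsilon}d^{[7,\frac{14}{3}-\frac1n,3]}$. Without this inversion of the roles of Thue--Siegel and the determinant estimate, your optimization of $M$ cannot close.
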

\begin{proof}
Denote by $S$ the set of $\FF_q[t]$-points on $f=0$ of logarithmic height less than $\ell$. Let $M$ be an integer such that any homogeneous polynomial $g$ of degree $M$ vanishing on $S$ is divisible by $f$. We prove that $M$ is bounded as stated. Assume that $M\geq 4q^\epsilon d^2$.

For an integer $D$, denote by $\mathcal{B}(D)$ the set of monomials of degree $D$ in the variables $x_0, ..., x_{n+1}$. Note that $|\mathcal{B}(D)|=\binom{D+n+1}{n+1}$. Take a maximal algebraically independent set $T\subseteq S$ and put $s=|T|$. This means that the $s\times r$ matrix $A=(b(t))_{t\in T,b\in \mathcal{B}(M)}$ has full rank, where $r=|\mathcal{B}(M)|$. By assumption, any degree $M$ polynomial vanishing on $T$ is divisible by $f$. But these are precisely the elements of $f\cdot \mathcal{B}(M-d)$ and hence
\[
s=|\mathcal{B}(M)|-|\mathcal{B}(M-d)|.
\]
The solutions to the linear system described by $A$ correspond to degree $M$ homogeneous polynomials vanishing on $T$ (and so also on $S$). Since any such polynomial $g$ is divisible by $f$, $g$ has a coefficient of degree at least $\deg c_f\geq \log_q||f||-d\log_qd$. Since every entry of $A$ has degree at most $M(\ell-1)$, we get from Theorem \ref{thm: thue-siegel} that
\[
\deg \Delta \leq sM(\ell-1)-(r-s)(\log_q||f||-d\log_qd),
\]
where $\Delta$ is the greatest common divisor of the determinants of all $s\times s$ minors of $A$. Fix $\epsilon>0$. By Proposition \ref{thm: lower bound on determinant} we get that
\begin{align*}
\frac{n!^{1/n}n}{n+1}&\frac{s^{1/n}}{M}\left(\frac{1}{n}\log_q s - 1 - \epsilon - \beta - \log_qb(f) + \log_qO_{n, \epsilon}(1)\right) \\
&\leq (\ell-1)-\frac{r-s}{Ms}(\log_q||f||-d\log_qd).
\end{align*}
In the same way as in the proof of \cite[Lem.\,3.3.2 \& 3.3.3]{CCDN} we obtain that $\log_q s = \log_qd + n\log_qM + \log_qO_n(1)$ and that
\[
\frac{s^{1/n}}{M} = \frac{d^{1/n}}{n!^{1/n}} + O_n\left(\frac{d^2}{M}\right).
\]
Hence we may replace the left-hand side by
\begin{align*}
\frac{d^{1/n}n}{n+1}\left(1 + O_n\left(\frac{d^{2-1/n}}{M}\right)\right) \left(\log_qM -1-\epsilon - \beta + \frac{1}{n}\log_qd - \log_qb(f) + \log_qO_{n, \epsilon}(1)\right).
\end{align*}
By our assumption that $M\geq 4q^\epsilon d^2$ we get that $\frac{d^{2-1/n}\log_qM}{M}$ and $\frac{d^{2-1/n}\log_qd}{M}$ are both in $\log_qO_{n, \epsilon}(1)$. Thus we may further reduce the left-hand side to
\[
\frac{d^{1/n}n}{n+1}\left(\log_qM -1-\epsilon - \beta + \frac{1}{n}\log_qd - (1+O(d^{2-1/n}/M))\log_qb(f) + \log_qO_{n, \epsilon}(1)\right)
\]

To estimate the right-hand side, we get from the proof of \cite[Lem.\,3.3.2 \& 3.3.3]{CCDN} that
\[
\frac{r-s}{Ms}= \frac{1}{d(n+1)} + O_n(1/M).
\]
So the right-hand side becomes
\[
(\ell - 1) - \frac{\log_q||f||}{d(n+1)} + O_n\left(\frac{\log_q||f||}{M}\right) + \frac{\log_qd}{n+1} + O_n\left( \frac{d\log_qd}{M}\right).
\]
The last two terms may be absorbed in the $\log_q O_{n, \epsilon}(1)$ on the left-hand side. Putting this all together we obtain that
\begin{align*}
\frac{d^{1/n}n}{n+1}&\left(\log_qM -1-\epsilon - \beta + \frac{1}{n}\log_qd - (1+O(d^{2-1/n}/M))\log_qb(f) + \log_qO_{n, \epsilon}(1)\right) \\
&\leq (\ell - 1) - \frac{\log_q||f||}{d(n+1)} + O_n\left(\frac{\log_q||f||}{M}\right).
\end{align*}

We now treat the two cases were $||f||$ is either small or large. Assume first that $M\geq d^{1-1/n}(\ell-1) q^\epsilon$ and that $\log_q||f||\leq 4(n+1)d(\ell-1)$. Then also $\log_q||f||\leq d^{1/n}M\log_q O_{n, \epsilon}(1)$ and so we may further absorb the term $\log_q||f||/M$ into the $\log_qO_{n, \epsilon}(1)$ on the left-hand side. For the same reason, we may also remove the term $O_n(d^{2-1/n}/M)\log_qb(f)$, since it is in $\log_q O_{n, \epsilon}(1)$. Rearranging yields that
\[
\log_qM \leq 1+\epsilon+\log_qO_{n,\epsilon}(1) + \frac{n+1}{d^{1/n}n}(\ell-1) + \beta - \frac{1}{n}\log_qd + \log_qb(f) - \frac{\log_q||f||}{nd^{1+1/n}},
\]
as desired. 

Secondly assume that $\log_q||f||> 4(n+1)d(\ell-1)$. Then $\frac{n+1}{d^{1/n}n}(\ell-1) \leq \frac{\log_q ||f||}{4d^{1+1/n}}$. We also assume that $M\geq O_n(1)d$ where the implicit constant is chosen such that the term $O_n\left(\frac{\log_q||f||}{M}\right)$ is bounded by $\frac{\log_q ||f||}{4nd}$. Rearranging shows that
\[
\log_qM \leq 1+\epsilon+\log_qO_{n,\epsilon}(1) +\beta - \frac{1}{n}\log_qd + \left(1+O_n\left(\frac{d^{2-1/n}}{M}\right)\right)\log_qb(f) - \frac{\log_q||f||}{2nd^{1+1/n}}.
\]
Putting $c=\left(1+O_n\left(\frac{d^{2-1/n}}{M}\right)\right)$, we have that
\begin{align*}
&c\log_qb(f) - \frac{\log_q||f||}{2nd^{1+1/n}} \\
 &\leq \max\{c\epsilon + c[6,2,2]\log_qd - c\beta + c\log_q\log_q||f|| - \frac{\log_q||f||}{2nd^{1+1/n}}, 0\} + \log_qO_{n, \epsilon}(1).
\end{align*}
Now note that $\log_q\log_qx - \frac{\log_qx}{c}\leq \log_qc+\log_qO(1)$ and so
\begin{align*}
c\left(\log_q\log_q||f|| - \frac{\log_q||f||}{2cnd^{1+1/n}}\right) 
    \leq c\log_qc + c\log_qd^{1+1/n} + c\log_qO_n(1).
\end{align*}
But since $c=\left(1+O_n\left(\frac{d^{2-1/n}}{M}\right)\right)$ we obtain that this quantity is $\log_qd^{1+1/n} + \log_qO_{n, \epsilon}(1)$. Therefore
\[
c\log_qb(f) - \frac{\log_q||f||}{2nd^{1+1/n}} \leq \left([7,3,3]+\frac{1}{n}\right)\log_qd + \epsilon - \beta + \log_qO_{n, \epsilon}(1).
\]
It follows that 
\[
\log_q M \leq 1+\epsilon+ \log_qO_{n,\epsilon}(1) + \left[ 7, \frac{14}{3} - \frac{1}{n}, 3 \right]\log_qd.\qedhere
\]
\end{proof}

With this, we can prove our main result for projective curves.

\begin{proof}[Proof of Theorem \ref{thm: projective counting}]
Apply Theorem \ref{thm: main theorem} for $n=1$ to obtain a polynomial $g$ of degree at most
\[
\leq_\epsilon q^{1+\epsilon}q^{\frac{2(\ell-1)}{d}}\frac{q^{\beta-\log_q d}b(f)}{||f||^{1/d^2}}+q^\epsilon(\ell-1) + q^{1+\epsilon}d^{[7, 11/3, 3]}.
\]
vanishing at all $\FF_q[t]$-points on $f=0$ of logarithmic height less than $\ell$. Apply Lemma \ref{thm: bound b(f)} and use the fact that $\frac{\log_q||f||^{1/d^2}}{||f||^{1/d^2}}=O(q^\epsilon)$ to obtain that
\[
\deg g\leq_{\epsilon}q^{1+\epsilon}q^{\frac{2(\ell-1)}{d}}d^{[7, 11/3, 3]}.
\]
Apply B\'ezout's theorem to conclude.
\end{proof}

\paragraph{The coordinate transformation.} We finish up the argument above with the required coordinate transformation.

\begin{lemma}
Let $f\in \FF_q[t][x]$ be of degree $\leq d$. Then there exists a polynomial $\alpha\in \FF_q[t]$ with $|\alpha|\leq d$ and such that $|f(\alpha)|\geq ||f||$. 
\end{lemma}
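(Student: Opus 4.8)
The claim rephrases as follows: writing $f=\sum_{i=0}^d c_ix^i$ with $c_i\in\FF_q[t]$ and $N=\max_i\deg c_i$ (so $\|f\|=q^N$), we must find $\alpha\in\FF_q[t]$ with $\deg\alpha\le m:=\lfloor\log_q d\rfloor$, equivalently $|\alpha|\le d$, such that $\deg_t f(\alpha)\ge N$. There are $q^{m+1}>d$ candidates for such $\alpha$.

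I would first treat $d<q$. Let $\bar f(x)=\sum_{i:\deg c_i=N}(\text{leading }t\text{-coefficient of }c_i)\,x^i\in\FF_q[x]$, which is nonzero of $x$-degree at most $d$. For $a\in\FF_q$ the coefficient of $t^N$ in $f(a)$ is exactly $\bar f(a)$, so $\deg_t f(a)=N$ whenever $\bar f(a)\ne 0$; since $\deg\bar f\le d<q$, the polynomial $\bar f$ cannot vanish on all of $\FF_q$, so such an $a$ exists, and $|a|\le 1\le d$ (the case $d=0$ being trivial with $\alpha=0$).

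For general $d$ I would argue along the Newton polygon of $f$, i.e. the upper convex hull of the points $(i,\deg c_i)$. For each integer $e$ with $0\le e\le m$ set $T_e=\max_i(\deg c_i+ei)$ (so $T_e\ge N$), $I_e=\{i:\deg c_i+ei=T_e\}$, and $P_e(x)=\sum_{i\in I_e}(\text{leading }t\text{-coefficient of }c_i)\,x^i\in\FF_q[x]$. If $\alpha=at^e+bt^{e-1}+\cdots$ with $a\ne 0$, a short computation shows that the coefficient of $t^{T_e}$ in $f(\alpha)$ equals $P_e(a)$, that of $t^{T_e-1}$ equals $bP_e'(a)+Q(a)$ for a function $Q$ of $a$ alone, and more generally the coefficients down to $t^N$ bring in successive Hasse derivatives of $P_e$ paired against the lower coefficients of $\alpha$. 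The number of usable ``levels'' grows with $e$, because $T_e-N\ge e\cdot(\text{smallest max-degree index})$ and that index may be assumed $\ge 1$ (otherwise $f(0)=c_0$ already has $\deg_t=N$ and we are done with $\alpha=0$). Hence, unless $f$ is very special, for some $e\le m$ one of these coefficients can be made nonzero by choosing $a\in\FF_q^\times$ and then $b,\dots$ suitably, yielding $\deg_t f(\alpha)\ge N$. Being unable to do so for every $e\le m$ forces, for each such $e$, that $P_e$ vanishes to high order along $\FF_q^\times$, i.e. $(x^{q-1}-1)^{k_e}$ divides $P_e$ with $k_e$ of order $e+1$; translating back, the slope-$(-e)$ edge of the Newton polygon must be wide, and summing these widths over the consecutive edges of slopes $0,-1,\dots,-m$ contradicts $\deg_x f\le d$.

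The main obstacle is precisely this last bookkeeping: one must make the chain of width inequalities close against the crude bound $\deg_x f\le d$, and this is tight enough that the slack $T_e-N$ genuinely has to be exploited. Over very small fields (already $q=2$) a nonzero polynomial can vanish on all of $\FF_q^\times$ without having many distinct roots, so the argument must be phrased in terms of multiplicities and Hasse derivatives rather than roots, and one must carefully disentangle the contributions of different Newton-polygon edges to a single power of $t$. A secondary nuisance is degenerate configurations (vanishing coefficients $c_i$, or indices accidentally lying on an edge), which must be checked not to destroy the claimed edge structure.
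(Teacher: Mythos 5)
Your first case ($d<q$, take $\alpha=a\in\FF_q$ with $\bar f(a)\neq 0$) is fine, but for general $d$ what you have is a plan with an acknowledged hole, and the hole is not just bookkeeping: the endgame you sketch is quantitatively insufficient. If, for each slope $e\le m=\lfloor\log_q d\rfloor$, the obstruction forces only $(x^{q-1}-1)^{k_e}\mid P_e$ with $k_e$ of order $e+1$, then the slope-$(-e)$ edge of the Newton polygon has width roughly $(e+1)(q-1)$, and summing over $e=0,\dots,m$ gives about $(q-1)(m+1)(m+2)/2$, which is of order $q\log_q^2 d$ --- far below $d$ whenever $d$ is not tiny (already $q=2$, $d=32$: the sum is $21<32$; or $m=1$ with $d$ close to $q^2$). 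So there is no contradiction with $\deg_x f\le d$. To close the argument this way you would need vanishing orders $k_e$ of size about $q^e$, so that the widths sum to roughly $q^{m+1}-1\ge d$; but your mechanism (fix the leading coefficient $a$ of $\alpha$ and tune the $e$ lower coefficients to expose successive levels $t^{T_e},t^{T_e-1},\dots$) can only ever certify about $e+1$ orders of vanishing of $P_e$, because that is all the freedom $\alpha$ of degree $e$ has. In addition, the intermediate claim that blocking all levels forces high-order vanishing of $P_e$ alone is not justified: the coefficient of $t^{T_e-j}$ in $f(\alpha)$ mixes Hasse derivatives of $P_e$ with the sub-leading $t$-coefficients of the $c_i$ and with other edges, so the vanishing for all $\alpha$ constrains this whole family, not just $P_e$.

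The paper avoids all of this with a short interpolation argument: take the $d+1$ polynomials $\alpha_0,\dots,\alpha_d\in\FF_q[t]$ of least degree (so $\deg\alpha_i\le\lfloor\log_q d\rfloor$, i.e.\ $|\alpha_i|\le d$), write $VC_f=M_f$ with $V$ the Vandermonde matrix, $C_f$ the coefficient vector and $M_f=(f(\alpha_i))_i$, and check from the explicit formula for $V^{-1}$ that all its entries have norm at most $1$ in the ultrametric absolute value; then $\lVert f\rVert=\lVert C_f\rVert_\infty\le\max_i|f(\alpha_i)|$, so some $\alpha_i$ works. If you want to salvage your approach you would in effect have to use all $q^{e}(q-1)$ polynomials of each degree $e$ simultaneously rather than level-by-level tuning, at which point you are reconstructing this interpolation argument; I would recommend switching to it directly.
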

\begin{proof}
Take $d+1$ disctint polynomials $\alpha_0, ..., \alpha_d$ in $\FF_q[t]$ of smallest degree possible. Letting $\kappa=\lfloor \log_q d \rfloor$ this implies that $\deg \alpha_i\leq \kappa$. Now denote by $V$ the $(d+1)\times (d+1)$ Vandermonde matrix on the $\alpha_i$. Denote by $M_f$ the vector $(f(\alpha_i))_{i=0, ..., d}$ and by $C_f$ the coefficient vector of $f$, then $VC_f=M_f$ and so
\[
|c_f| = ||C_f||_\infty \leq ||V^{-1}||_\infty||M_f||_\infty.
\]
The inverse of $V$ has entries given by 
\[
V_{kj}^{-1} = \frac{1}{\displaystyle\prod_{m\neq j}(\alpha_j-\alpha_m)}\begin{cases}
\pm \displaystyle\sum_{\substack{0\leq m_1<...<m_{d-k}\leq d \\ m_i\neq j}}\alpha_{m_1}\cdots \alpha_{m_{d-k}} & 0\leq k< d \\
\pm 1 & k=d.
\end{cases}
\]
This explicit description shows that $|V_{kj}^{-1}|\leq 1$ and the result follows.
\end{proof}

\begin{lemma}\label{thm: coordinate transformation}
Let $f\in \FF_q[t][x_0, ..., x_{n+1}]$ be homogeneous of degree $d$. Then there exist polynomials $\alpha_1, ..., \alpha_{n+1}$ with $|\alpha_i|\leq d$ and such that 
\[
|f(1,\alpha_1, ..., \alpha_{n+1})|\geq ||f||.
\]
\end{lemma}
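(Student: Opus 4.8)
The plan is to reduce the multivariate statement to the one-variable lemma just proved, by an iterated substitution. First I would set $g(x_1,\ldots,x_{n+1}) = f(1,x_1,\ldots,x_{n+1})$, a (no longer homogeneous) polynomial of degree $\leq d$ in $n+1$ variables whose coefficients are exactly the coefficients of $f$ (a homogeneous polynomial in $n+2$ variables dehomogenized at $x_0$), so $\|g\| = \|f\|$. The goal is then to find $\alpha_1,\ldots,\alpha_{n+1}\in\FF_q[t]$ with $|\alpha_i|\leq d$ and $|g(\alpha_1,\ldots,\alpha_{n+1})| \geq \|g\|$.

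The key step is to peel off one variable at a time. Write $g$ as a polynomial in $x_{n+1}$ with coefficients in $\FF_q[t][x_1,\ldots,x_n]$:
\[
g(x_1,\ldots,x_{n+1}) = \sum_{j=0}^{d} g_j(x_1,\ldots,x_n)\, x_{n+1}^j.
\]
At least one coefficient $g_j$ has $\|g_j\| = \|g\|$. I would like to choose $\alpha_{n+1}$ with $|\alpha_{n+1}|\leq d$ such that, viewing $g(x_1,\ldots,x_n,\alpha_{n+1})$ as a polynomial in $x_1,\ldots,x_n$, its max-norm is still $\geq \|g\|$. This is precisely the content of the previous one-variable lemma applied "coefficientwise": fix any monomial $m$ in $x_1,\ldots,x_n$ for which the coefficient of $m$ in some $g_j$ attains $\|g\|$; the polynomial $h(x_{n+1}) = \sum_j [\text{coeff of }m\text{ in }g_j]\, x_{n+1}^j \in \FF_q[t][x_{n+1}]$ has degree $\leq d$ and $\|h\| = \|g\|$, so by the previous lemma there is $\alpha_{n+1}$ with $|\alpha_{n+1}|\leq d$ and $|h(\alpha_{n+1})|\geq \|h\| = \|g\|$. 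But $h(\alpha_{n+1})$ is exactly the coefficient of $m$ in $g(x_1,\ldots,x_n,\alpha_{n+1})$, so $\|g(x_1,\ldots,x_n,\alpha_{n+1})\| \geq |h(\alpha_{n+1})| \geq \|g\|$. Since substituting can only shrink the norm (the coefficients of the substituted polynomial are $\FF_q[t]$-combinations of the original ones, but here we only need the lower bound just established), in fact $\|g(x_1,\ldots,x_n,\alpha_{n+1})\| = \|g\|$, and its degree in the remaining variables is still $\leq d$.

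I would then iterate: apply the same argument to $g(x_1,\ldots,x_n,\alpha_{n+1})$ to produce $\alpha_n$ with $|\alpha_n|\leq d$ preserving the max-norm, and so on down to $\alpha_1$. After all $n+1$ substitutions we are left with a constant in $\FF_q[t]$ of norm $\geq \|g\| = \|f\|$, namely $f(1,\alpha_1,\ldots,\alpha_{n+1})$, which is the desired conclusion; and all $\alpha_i$ satisfy $|\alpha_i|\leq d$. The only point requiring a little care — the main (mild) obstacle — is the bookkeeping in the inductive step: one must track a single fixed monomial whose coefficient witnesses the max-norm, make sure the one-variable lemma is applied to the right one-variable polynomial (the "slice" of coefficients along that monomial), and observe that norms are non-increasing under $\FF_q[t]$-substitution so that the norm, once shown to stay $\geq\|f\|$, is in fact exactly $\|f\|$ throughout. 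None of this is deep, but it is the crux of why the univariate lemma suffices.
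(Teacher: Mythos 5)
Your proposal is correct and is essentially the paper's own proof, which simply sets $x_0=1$ and inducts on the number of variables using the univariate lemma; your "slice along a witnessing monomial" bookkeeping is exactly how that induction is carried out. One small caveat: your aside that norms are non-increasing under substitution (hence that the norm stays equal to $||f||$) is false, since $|\alpha_i|$ may be as large as $d$ and coefficients can grow, but this is harmless because, as you note, only the chained lower bound $\geq ||f||$ is needed.
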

\begin{proof}
Put $x_0=1$ and use induction with the previous lemma.
\end{proof}

Let us now check that Theorem \ref{thm: main theorem} still holds for any $f$. So let $f$ be an arbitrary primitive absolutely irreducible polynomial in $\FF_q[t][x_0, ..., x_{n+1}]$. Take $\alpha_1, ..., \alpha_{n+1}$ in $\FF_q[t]$ such that $|\alpha_i|\leq d$ and $|f(1,\alpha_1, ..., \alpha_{n+1})|\geq ||f||$. Define 
\[
f'(x_0, ..., x_{n+1}) = f(x_0, x_1+\alpha_1x_0 ..., x_n+\alpha_nx_0, x_{n+1}+\alpha_{n+1}x_0).
\]
By definition we have that $|c_{f'}|\geq ||f||$. But since $|\alpha_i|\leq d$ we also have that $||f'||\leq ||f||d^d$. Therefore $\deg c_f \geq \log_q||f'|| - d\log_qd$. By Theorem \ref{thm: main theorem} there is a polynomial $g'$ of degree
\[
\leq_{\epsilon,n} q^{1+\epsilon} q^{\frac{n+1}{nd^{1/n}}(\ell-1)} \frac{q^{\beta - \frac{1}{n}\log_q d}b(f)} {||f||^{n^{-1}d^{-1-1/n}}} + q^{1+\epsilon}d^{1-1/n}(\ell-1) + q^{1+\epsilon}d^{\left[ 7, \frac{14}{3}-\frac{1}{n}, 3\right]}.
\]
vanishing on all points of $f'=0$ logarithmic height less than $\ell+\lfloor \log_q d \rfloor$. Here we have used that $q^{(n+1)\log_qd/ d^{1/n}}=O_n(1)$. Then the polynomial 
\[
g(x_0, ..., x_{n+1}) = g'(x_0, x_1-\alpha_1x_0 ..., x_n-\alpha_n, x_{n+1}-\alpha_{n+1}x_0)
\]
has the same degree as $g'$ and vanishes on all points of $f=0$ of logarithmic height less than $\ell$, proving Theorem \ref{thm: main theorem} for $f$. 

\paragraph{Affine curves.} For counting on affine curves we use a technique from \cite{Ellenb-Venkatesh}, refined in \cite[Sec.\,4.2]{CCDN}.

\begin{lemma}\label{thm: elementary bound affine}
Let $F\in \FF_q[t][x_0, ..., x_{n+1}]$ be primitive homogeneous of degree $d$. For $1\leq y \leq ||F||$ we have 
\[
q^{\beta}d^{-1/n}\frac{b(F)}{||F||^{\frac{1}{nd^{1+1/n}}}} \leq_{\epsilon, n} q^\epsilon \frac{d^{[6,2,2]-1/n}\log_qy + d^{[7, \frac{14}{3}-\frac{1}{n}, 3]}} {y^{\frac{1}{nd^{1+1/n}}}}.
\]
\end{lemma}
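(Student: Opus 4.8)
The statement is an inequality comparing two quantities that both depend on $\|F\|$: on the left we have $b(F)$ (times some powers of $q^\beta$ and $d$), divided by a small power of $\|F\|$; on the right we have an explicit expression in $\log_q\|F\|$, $d$, and an auxiliary parameter $y$. The key observation is that Lemma~\ref{thm: bound b(f)} already controls $b(F)$: it gives $b(F)\leq_\epsilon q^\epsilon\max\{q^{-\beta}d^{[6,2,2]}\log_q\|F\|,\,1\}$. So the strategy is to substitute this bound for $b(F)$ and then play off the two terms in the maximum against the denominators $\|F\|^{1/(nd^{1+1/n})}$ and $y^{1/(nd^{1+1/n})}$, using $1\leq y\leq\|F\|$ to pass from one to the other.

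First I would split into the two cases coming from the maximum in Lemma~\ref{thm: bound b(f)}. In the case where the max is $1$, the left side is $\leq_\epsilon q^\epsilon q^\beta d^{-1/n}/\|F\|^{1/(nd^{1+1/n})}$; here I would use $\|F\|\geq y$ to replace $\|F\|^{1/(nd^{1+1/n})}$ in the denominator by $y^{1/(nd^{1+1/n})}$ (which only makes the fraction larger, so the inequality is preserved in the right direction — wait, I need the left side \emph{smaller}, so I must be careful: $\|F\|\geq y$ gives $\|F\|^{c}\geq y^c$, hence $1/\|F\|^c\leq 1/y^c$, which is what I want). Then $q^\beta=O_\epsilon(d^{[14/3,\,\cdot,\,0]})$ roughly — actually $q^\beta\leq d^{14/3}$ by definition of $\beta$ in small/large characteristic and $q^\beta=1$ in very large characteristic — so $q^\beta d^{-1/n}$ is absorbed into the $d^{[7,14/3-1/n,3]}$ term on the right (this is precisely why that exponent appears). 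In the case where the max is $q^{-\beta}d^{[6,2,2]}\log_q\|F\|$, the $q^{-\beta}$ cancels the $q^\beta$ and the left side becomes $\leq_\epsilon q^\epsilon d^{[6,2,2]-1/n}\log_q\|F\|/\|F\|^{1/(nd^{1+1/n})}$.

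The remaining point in the second case is to convert $\log_q\|F\|/\|F\|^{1/(nd^{1+1/n})}$ into $\log_q y/y^{1/(nd^{1+1/n})}$ plus an acceptable error. Write $c=1/(nd^{1+1/n})$ and $\|F\|=y^{1+u}$ for some $u\geq 0$ (so $\log_q\|F\|=(1+u)\log_q y$ and $\|F\|^c=y^c\cdot y^{uc}$). Then
\[
\frac{\log_q\|F\|}{\|F\|^c}=\frac{(1+u)\log_q y}{y^{uc}\,y^c}\leq \frac{\log_q y}{y^c}+\frac{u\log_q y}{y^{uc}\,y^c}.
\]
For the second summand, the elementary inequality $u\log_q y\leq y^{uc}/c\cdot\log_q O(1)$ — i.e.\ $ue^{-uc\ln q}$ is bounded in terms of $c$ — shows $u\log_q y/y^{uc}\leq O(1/c)=O(nd^{1+1/n})$, so that summand is $\leq_n d^{1+1/n}/y^c$, which is absorbed into the $d^{[7,14/3-1/n,3]}/y^c$ term on the right since $1+1/n\leq 7$ etc. Collecting the two cases gives exactly the claimed bound.

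**Main obstacle.** The only genuinely delicate step is the last one: bounding $u\log_q y/y^{uc}$ uniformly in both $u\geq 0$ and $y\geq 1$, and checking that the resulting error is genuinely of the advertised shape $q^\epsilon d^{[7,14/3-1/n,3]}/y^c$ rather than something slightly worse. One has to be careful about the regime $y$ close to $1$ (where $\log_q y$ is tiny but $u$ can be huge) versus $y$ large; the clean way is the substitution above together with the fact that $\sup_{v\geq 0} v e^{-v}$ is an absolute constant, applied with $v=uc\ln q$. Everything else is bookkeeping with the definition of $\beta$ and the three-case $[\,\cdot,\cdot,\cdot\,]$ notation, plus one application of Lemma~\ref{thm: bound b(f)}.
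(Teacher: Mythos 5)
Your proposal is correct and essentially follows the paper's own route: both start from Lemma \ref{thm: bound b(f)} and reduce everything to the elementary fact that $\log_q x / x^{1/(nd^{1+1/n})}$ is bounded by $O_n(d^{1+1/n})$, the paper by splitting on whether $y$ lies above or below the maximizer $e^{nd^{1+1/n}}$ of this function, you by writing $||F||=y^{1+u}$ and invoking $\sup_{v\geq 0} v e^{-v}<\infty$. Only two cosmetic repairs are needed: the substitution should be $v=uc\ln y$ rather than $uc\ln q$ (with the literal $uc\ln q$ the comparison with $y^{uc}$ does not come out cleanly), and the boundary case $y=1$, where $u$ is undefined, should be handled directly by the same global-maximum bound or by letting $y\to 1^+$; after that the bookkeeping with $q^\beta\leq d^{14/3}$ and the bracket notation goes through exactly as you describe.
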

\begin{proof}
If $||F||=1$ then $y=1$ and we are done. So assume that $||F||>1$. The map
\[
x\mapsto \frac{\log_q x}{x^{\frac{1}{nd^{1+1/n}}}}
\]
is increasing on $(1, c)$ and decreasing on $(c, \infty)$, with $c=e^{nd^{1+1/n}}$. It has a global maximum at $x=c$ with value $\frac{nd^{1+1/n}\log_qe}{e}=\log_qO_n(1)\cdot d^{1+1/n}$. By Lemma \ref{thm: bound b(f)} we have $b(F)\leq_\epsilon q^\epsilon (q^{-\beta}d^{[6,2,2]}\log_q ||F|| +1)$. Assume first that $y\geq e^{nd^{1+1/n}}$, then
\begin{align*}
q^{\beta}d^{-1/n}\frac{b(F)}{||F||^{\frac{1}{nd^{1+1/n}}}} &\leq_{\epsilon, n} q^\epsilon d^{-1/n} \frac{d^{[6,2,2]}\log_q||F|| + q^\beta} {||F||^{\frac{1}{nd^{1+1/n}}}} \\
&\leq_{\epsilon, n} q^\epsilon \frac{d^{[6,2,2]-1/n} \log_qy + d^{[\frac{14}{3}, \frac{14}{3}, 0]-1/n}}{y^{\frac{1}{nd^{1+1/n}}}}.
\end{align*}
Now suppose that $y\leq e^{nd^{1+1/n}}$. Then
\begin{align*}
q^{\beta}d^{-1/n}\frac{b(F)}{||F||^{\frac{1}{nd^{1+1/n}}}} &\leq_{\epsilon, n} q^\epsilon d^{\frac{-1}{n}} \left( \frac{d^{[6,2,2]\log_q e^{nd^{1+1/n}}}}{e} + \frac{q^\beta}{y^{\frac{1}{nd^{1+1/n}}}}\right) \\
&\leq_{\epsilon, n} q^\epsilon \frac{d^{[7,3,3]} + d^{[\frac{14}{3}, \frac{14}{3}, 0]-\frac{1}{n}}}{y^{\frac{1}{nd^{1+1/n}}}} \leq_{\epsilon, n} q^\epsilon \frac{d^{[7, \frac{14}{3}-\frac{1}{n}, 3]}}{y^{\frac{1}{nd^{1+1/n}}}}. \qedhere
\end{align*}
\end{proof}

\begin{lemma}\label{thm: affine large coefficients}
Let $f\in\FF_q[t][x_1, ..., x_{n+1}]$ be a primitive irreducible polynomial of degree $d\geq 2$. Then either $\log_q||f||\leq \ell d\binom{d+n+1}{n+1}$ or there exists a non-zero degree $d$ polynomial $g\in \FF_q[t][x_1, ..., x_{n+1}]$ coprime to $f$ and vanishing on all points of $f=0$ of logarithmic height less than $\ell$.
\end{lemma}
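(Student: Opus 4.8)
The plan is to phrase everything in terms of the $\FF_q(t)$-vector space $V$ of polynomials of degree $\le d$ in $x_1,\dots,x_{n+1}$ vanishing on the (finite) set $S$ of $\FF_q[t]$-points of $f=0$ of logarithmic height $<\ell$, and to split on $\dim_{\FF_q(t)}V$. Write $r=\binom{d+n+1}{n+1}$ for the number of monomials of degree $\le d$ in $n+1$ variables. The crucial preliminary remark is that $f\in V$, and that by comparison of degrees the only polynomials of degree $\le d$ divisible by $f$ are the scalar multiples of $f$; hence $\FF_q(t)\cdot f\subseteq V$, so $\dim_{\FF_q(t)}V\ge 1$, and this line is exactly the set of degree-$\le d$ polynomials that are \emph{not} coprime to $f$.

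If $\dim_{\FF_q(t)}V\ge 2$, I would pick $h\in V$ outside the line $\FF_q(t)\cdot f$, clear denominators so that $h\in\FF_q[t][x_1,\dots,x_{n+1}]$ (still nonzero, still vanishing on $S$, still not an $\FF_q[t]$-multiple of $f$), and note that since $f$ is prime in the unique factorization domain $\FF_q[t][x_1,\dots,x_{n+1}]$ and does not divide $h$, we have $\gcd(f,h)=1$. Then I would fix the degree: if $\deg h=d$ take $g=h$, otherwise $\deg h<d$ and take $g=h+f$, which has degree exactly $d$, still vanishes on $S$, and still satisfies $\gcd(f,g)=\gcd(f,h)=1$. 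This produces the polynomial asserted in the second alternative.

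If instead $\dim_{\FF_q(t)}V=1$, then $V=\FF_q(t)\cdot f$, and I would extract a height bound on $f$ as follows. Listing the degree-$\le d$ monomials as $b_1,\dots,b_r$, the evaluation matrix $A=\bigl(b_k(\xi)\bigr)_{\xi\in S,\,1\le k\le r}$ over $\FF_q[t]$ has right kernel (over $\FF_q(t)$) equal to $V$, hence rank $r-1$; choosing $r-1$ points with linearly independent rows gives an $(r-1)\times r$ submatrix $A'$ of rank $r-1$ whose kernel is still $\FF_q(t)\cdot f$. By Cramer's rule this kernel is spanned by the vector of signed maximal minors of $A'$, so the coefficient vector of $f$ is an $\FF_q(t)$-multiple of that vector. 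Each such minor is an $(r-1)\times(r-1)$ determinant whose entries $b_k(\xi_i)$ are products of at most $d$ coordinates each of degree $\le\ell-1$, hence have degree $\le d(\ell-1)$, so each minor has degree $\le(r-1)d(\ell-1)$. Since $f$ is primitive its coefficient vector is, up to a constant, the primitive part of this minor vector, whence
\[
\log_q\|f\|\ \le\ (r-1)\,d\,(\ell-1)\ \le\ \ell\, d\binom{d+n+1}{n+1},
\]
which is the first alternative.

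I expect the only subtle point to be the second branch: turning ``$V=\FF_q(t)\cdot f$'' into an effective bound on $\|f\|$ via the Cramer description of the kernel of $A'$, together with the bookkeeping that ``coprime to $f$'' here means ``not an $\FF_q[t]$-multiple of $f$'' — which is precisely why it pays to identify the multiples of $f$ inside the degree-$\le d$ polynomials right at the start. Everything else is routine linear algebra over $\FF_q[t]$.
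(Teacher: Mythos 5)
Your proof is correct and is essentially the paper's argument: both rest on the evaluation matrix of the degree-$\leq d$ monomials at the points of height $<\ell$, whose rank is at most $\binom{d+n+1}{n+1}-1$, together with a Cramer-type kernel vector of maximal minors of degree at most $\ell d\binom{d+n+1}{n+1}$, and the dichotomy ``the resulting polynomial is divisible by $f$ (forcing the bound on $\log_q\|f\|$) versus coprime to $f$''. Your reorganization --- splitting on $\dim_{\FF_q(t)}V$ first and adding $f$ to force degree exactly $d$ --- is only a cosmetic variation on the paper's proof.
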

\begin{proof}
Put $\theta=\binom{d+n+1}{n+1}$, this is simply the number of monomials of degree $\leq d$ in $x_1, ..., x_{n+1}$. Let $p_1, ..., p_N$ be the points in $\AA^{n+1}$ on $V(f)$ with $h(p_i)<\ell$ and let $A$ be the matrix whose rows are the $\theta$ monomials of degree $\leq d$ evaluated at the $p_i$. The solutions to $AX=0$ describe polynomials of degree $\leq d$ vanishing on $p_1, ..., p_N$. Since $f$ vanishes there, the rank of $A$ is at most $\theta-1$. Hence we may construct a non-zero solution to $AX=0$ whose entries are subdeterminants of $A$. In particular, $\deg X_i\leq \ell d\theta$. If $g$ is the polynomial corresponding to $X$ then either $f\mid g$ and so $\log_q||f||\leq \ell d\theta$, or $g$ and $f$ are coprime as stated.
\end{proof}

\begin{lemma}\label{thm: main lemma affine counting}
Let $f\in \FF_q[t][x_1, ..., x_{n+1}]$ be primitive and irreducible of degree $d\geq 2$. Denote by $f_i$ the degree $i$ part of $f$. Fix a positive integer $\ell$. Then there exists a $g\in\FF_q[t][x_1, ..., x_n]$ not divisible by $f$ vanishing on all points of $f$ of logarithmic height less than $l$ and of degree 
\begin{align*}
M\leq_{\epsilon, n}\, &q^{1+\epsilon}q^{\frac{\ell-1}{d^{1/n}}}  \frac{\min\{d^{[6,2,2]-1/n}(\log_q ||f_d|| + d(\ell-1)) +d^{[7,\frac{14}{3} -\frac{1}{n}, 3]}, q^{\beta}d^{- 1/n} b(f)\}}{||f_d||^{\frac{1}{nd^{1+1/n}}}}\\
&\qquad + q^{\epsilon}d^{1-1/n}(\ell-1) + q^{1+\epsilon}d^{\left[ 7, \frac{14}{3}-\frac{1}{n}, 3\right]}.
\end{align*}
\end{lemma}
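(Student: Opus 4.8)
The plan is to reduce the affine counting problem to the projective one by homogenizing. Given the primitive irreducible affine polynomial $f\in\FF_q[t][x_1,\dots,x_{n+1}]$ of degree $d$, I would form its homogenization $F\in\FF_q[t][x_0,\dots,x_{n+1}]$, where $x_0$ is the new variable; then $F$ is primitive, homogeneous of degree $d$, and still absolutely irreducible after the usual reduction (as in the first paragraph of Section 3, referring to \cite[Sec.\,4]{Walsh}). An affine $\FF_q[t]$-point $(\alpha_1,\dots,\alpha_{n+1})$ on $f=0$ of logarithmic height less than $\ell$ lifts to the projective point $(1:\alpha_1:\cdots:\alpha_{n+1})$ on $F=0$, again of logarithmic height less than $\ell$. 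So the set $S$ of affine points injects into the set of projective points of height less than $\ell$ on $V(F)$, and it suffices to produce a homogeneous $g$ of the stated degree vanishing on $V(F)$ at these points and not divisible by $F$; dehomogenizing $g$ (setting $x_0=1$) then gives the required affine polynomial, possibly after noting that $x_0$ itself does not divide $g$ or absorbing that case.

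Next I would apply Theorem~\ref{thm: main theorem} to $F$. The degree of the top-degree part $f_d$ of $f$ coincides with $\|F\|$ up to the contribution of lower-degree terms of $f$, but we cannot bound $\|F\|$ purely by $\|f_d\|$; this is the point where the coordinate-transformation paragraph enters. The subtlety the statement is addressing is precisely that the bound in Theorem~\ref{thm: main theorem} involves $\|f\|$ (equivalently $\|F\|$) rather than $\|f_d\|$, whereas the desired conclusion is in terms of $\|f_d\|$. To bridge this I would split into cases exactly as in Lemma~\ref{thm: affine large coefficients}: either $\log_q\|f\|$ is bounded by $\ell d\binom{d+n+1}{n+1}$ (the "small coefficients" regime, handled directly by feeding $\|F\|$ into Theorem~\ref{thm: main theorem} and using $\log_q\|F\| \geq \log_q\|f_d\|$ to control the $\|F\|^{-1/(nd^{1+1/n})}$ denominator from below), or $\log_q\|f\|$ is large, in which case Lemma~\ref{thm: affine large coefficients} already hands us a degree-$d$ polynomial $g$ coprime to $f$ vanishing on all the affine points — which trivially satisfies the claimed degree bound since $d$ is dominated by the $q^{1+\epsilon}d^{[7,14/3-1/n,3]}$ term.

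The analytic heart of the argument, and the step I expect to be the main obstacle, is producing the $\min\{\dots\}$ form of the numerator and getting the exponent $\frac{1}{nd^{1+1/n}}$ on $\|f_d\|$ to appear cleanly. One branch of the minimum, $q^\beta d^{-1/n}b(f)$, comes straight from Theorem~\ref{thm: main theorem} (with $\|F\|$ in the denominator, then estimated by $\|f_d\|$ using $\|F\|\geq\|f_d\|$). The other branch, $d^{[6,2,2]-1/n}(\log_q\|f_d\|+d(\ell-1))+d^{[7,14/3-1/n,3]}$, is obtained by invoking Lemma~\ref{thm: elementary bound affine} applied to $F$ with $y$ chosen appropriately — most naturally $y=\|f_d\|$ when $\|f_d\|\geq 1$, absorbing the discrepancy between $\log_q\|F\|$ and $\log_q\|f_d\|+d(\ell-1)$ using the small-coefficients bound from the case split, since in that regime $\log_q\|F\|\leq \log_q\|f_d\|+O(d(\ell-1)\cdot\text{poly}(d))$ after accounting for the lower-order terms of $f$. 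Care is needed that $y\leq\|F\|$ so that Lemma~\ref{thm: elementary bound affine} applies; this is where the case hypothesis $\log_q\|f\|\leq \ell d\binom{d+n+1}{n+1}$ keeps everything consistent. The remaining two additive terms $q^\epsilon d^{1-1/n}(\ell-1)$ and $q^{1+\epsilon}d^{[7,14/3-1/n,3]}$ carry over verbatim from Theorem~\ref{thm: main theorem}, with the logarithmic $\log_q d$ shift in height from homogenization absorbed into the implicit constants as in the coordinate-transformation discussion. Assembling these estimates and taking the minimum over the two available numerator bounds yields the stated $M$.
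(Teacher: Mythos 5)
There is a genuine gap, and it is the central point of the lemma: the exponential factor. Applying Theorem~\ref{thm: main theorem} to the plain homogenization $F_1$ of $f$ gives a main term with the factor $q^{\frac{n+1}{nd^{1/n}}(\ell-1)}$, whereas the lemma claims $q^{\frac{\ell-1}{d^{1/n}}}=q^{\frac{n}{nd^{1/n}}(\ell-1)}$; the excess $q^{\frac{\ell-1}{nd^{1/n}}}$ (for $n=1$ this is exactly the gap between the projective exponent $2(\ell-1)/d$ and the affine target $(\ell-1)/d$) grows with $\ell$ and nothing in your proposal removes it. Replacing $\|F_1\|=\|f\|$ by $\|f_d\|$ and invoking Lemma~\ref{thm: elementary bound affine} with $y=\|f_d\|$ only reshapes the numerator and denominator; it does not touch the exponent. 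Nor does your case split help: $\log_q\|f\|\leq \ell d\binom{d+n+1}{n+1}$ does not force $q^{\ell-1}\leq_n d^{O_n(1)}$, so the excess factor is not $O_n(1)$ in that regime. The paper's proof rests on the Ellenberg--Venkatesh twist that your reduction omits: one uses the weighted homogenization $F_H=\sum_{i=0}^d H^i f_i x_0^{d-i}$ with $H=p$ a prime of degree $\ell-1$ not dividing $f_0$ (so $F_p$ stays primitive). Affine points lift to $(p:x_1:\cdots:x_{n+1})$, still of logarithmic height $<\ell$, while $\|F_p\|\geq q^{d(\ell-1)}\|f_d\|$, so the factor $\|F_p\|^{-\frac{1}{nd^{1+1/n}}}$ in Theorem~\ref{thm: main theorem} contributes $q^{-\frac{\ell-1}{nd^{1/n}}}$ and exactly cancels the excess. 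This is also where the $d(\ell-1)$ inside the minimum really comes from: it is $\log_q y$ for $y=q^{d(\ell-1)}\|f_d\|$ in Lemma~\ref{thm: elementary bound affine}, not a comparison of $\log_q\|F\|$ with $\log_q\|f_d\|$ as you suggest; your proposed absorption $\log_q\|F\|\leq \log_q\|f_d\|+O(d(\ell-1)\,d^{O_n(1)})$ would in any case push a $d^{O_n(1)}$ factor inside the numerator and prove a weaker statement, since the implied constant in $\leq_{\epsilon,n}$ may not depend on $d$.

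Your plain homogenization $F_1$ does appear in the paper, but only in the degenerate regime $q^{\ell-1}\leq_n d^{O_n(1)}$, where the excess factor is $O_n(1)$. The correct case structure is governed not by the size of $\|f\|$ but by whether a prime $p$ of degree $\ell-1$ with $p\nmid f_0$ exists: if every such prime divides $f_0\neq 0$, then either $\log_q|f_0|\geq \ell d\binom{d+n+1}{n+1}$ and Lemma~\ref{thm: affine large coefficients} supplies the auxiliary polynomial, or the prime number theorem forces $q^{\ell-1-\epsilon}\leq_{\epsilon,n} d^{O_n(1)}$ and one is back in the $F_1$ regime; the case $f_0=0$ is handled by a coordinate shift via Lemma~\ref{thm: coordinate transformation} at the cost of replacing $\ell$ by $\ell+\lfloor\log_q d\rfloor$. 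Without the prime twist your argument can only deliver the projective-strength bound, not the stated affine one.
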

\begin{proof}
By the same reasoning as in \cite[Sec.\,4]{Walsh} applied to the homogenization of $f$ we may assume that $f$ is absolutely irreducible. For a monic polynomial $H\in \FF_q[t]$ define the degree $d$ homogeneous polynomial $F_H(x_0, ..., x_n) = \sum_{i=0}^d H^if_ix_0^{d-i}$. Note that every $\FF_q[t]$-point $(x_1, ..., x_n)$ of $f=0$ gives us the point $(H: x_1: ...: x_n)$ on $F_H=0$. 

Suppose first that $q^{\ell-1} \leq_n d^{O_n(1)}$. Then $q^{\frac{\ell-1}{nd^{1/n}}}=O_n(1)$ and so we use Theorem \ref{thm: main theorem} to obtain a homogeneous polynomial $G_1$ not divisible by $F_1$ of degree 
\[
\leq_{\epsilon, n} q^{1+\epsilon}q^{\frac{\ell-1}{d^{1/n}}}\frac{q^{\beta}d^{-1/n} b(F_1)}{||F_1||^{\frac{1}{nd^{1+1/n}}}} + q^{\epsilon}d^{1-1/n}(\ell-1) + q^{1+\epsilon}d^{\left[ 7, \frac{14}{3}-\frac{1}{n}, 3\right]},
\]
vanishing on all points of $F_1$ of logarithmic height less than $\ell$. As $b(F_1)=b(f)$ and $||F_1||\geq ||f_d||$ we obtain via Lemma \ref{thm: elementary bound affine} that $\deg G_1$ is bounded by 
\begin{align*}
\leq_{\epsilon, n} & q^{1+\epsilon}q^{\frac{\ell-1}{d^{1/n}}} \frac{\min\{d^{[6,2,2]-\frac{1}{n}}\log_q ||f_d|| + d^{[7, \frac{14}{3}-\frac{1}{n}, 3]}, q^{\beta}d^{-1/n} b(f)\}} {||f_d||^{\frac{1}{nd^{1+1/n}}}}  \\
&\qquad + q^{\epsilon}d^{1-1/n}(\ell-1) + q^{1+\epsilon}d^{\left[ 7, \frac{14}{3}-\frac{1}{n}, 3\right]}.
\end{align*}
Dehomogenizing $G_1$ gives the desired bound.

Now suppose that there is a prime $p$ of degree $\ell-1$ such that $p\nmid f_0$. Then $F_p$ is primitive and so by Theorem \ref{thm: main theorem} there exists a homogeneous $G_p$ not divisible by $F_p$ of degree 
\[
\leq_{\epsilon, n} q^{1+\epsilon}q^{\frac{n+1}{nd^{1/n}}(\ell-1)} \frac{q^{\beta}d^{-1/n} b(F_p)}{||F_p||^{\frac{1}{nd^{1+1/n}}}} + q^{\epsilon}d^{1-1/n}(\ell-1) + q^{1+\epsilon}d^{\left[ 7, \frac{14}{3}-\frac{1}{n}, 3\right]},
\]
vanishing on all points of logarithmic height less than $\ell$ on $F_p=0$. Moreover, since $||F_p||\geq |p|^d||f_d|| = q^{d(\ell-1)}||f_d||$ and since $b(F_p)$ and $b(f)$ agree up to $q^{\deg p/|p|}=O(1)$, we see by applying Lemma \ref{thm: elementary bound affine} that
\begin{align*}
\deg G_p &\leq_{\epsilon, n} q^{1+\epsilon} q^{\frac{\ell-1}{d^{1/n}}} \frac{\min\{d^{[6,2,2]-\frac{1}{n}}(\log_q||f_d|| + d(\ell-1)) + d^{[7, \frac{14}{3}-\frac{1}{n}, 3]}, q^{\beta}d^{-1/n}b(f) \}} {||f_d||^{\frac{1}{nd^{1+1/n}}}} \\
& \qquad + q^{\epsilon}d^{1-1/n}(\ell-1) + q^{1+\epsilon}d^{\left[ 7, \frac{14}{3}-\frac{1}{n}, 3\right]}.
\end{align*}
Taking the dehomogization of $G_p$ gives the bound. 

Now assume that $p\mid f_0$ for any prime $p$ of degree $\ell-1$. If $f_0$ is non-zero then 
\[
\sum_{\deg p = \ell-1} \deg p\leq \log_q |f_0|.
\]
The left-hand side is roughly of size $q^{\ell-1} + O(q^{(\ell-1)/2})$ by the prime number theorem. If $\log_q |f_0|\geq \ell d\binom{d+n+1}{n+1}$ then we apply Lemma \ref{thm: affine large coefficients} to $F_1$ and we're done. So we have $\log_q |f_0|\leq \ell d\binom{d+n+1}{n+1}$. Then 
\[
\sum_{\deg p = \ell-1} \deg p = q^{\ell-1} + O(q^{(\ell-1)/2})\leq d\ell \binom{d+n+1}{n+1},
\]
and so $q^{\ell-1-\epsilon} \leq_{\epsilon, n} d^{O_n(1)}$. Then we are done by the discussion above.

Finally, if $f_0=0$, then by Lemma \ref{thm: coordinate transformation} (applied to the homogenization of $f$) we find $\alpha_1, ..., \alpha_n$ with $f(\alpha_1, ..., \alpha_n)\neq 0$ and $|\alpha_i|\leq d$. We then apply the above discussion to $g(x_1, ..., x_n) = f(x_1+\alpha_1, ... x_n+\alpha_n)$ with $\ell' = \ell+ \lfloor\log_q d\rfloor$ to finish the proof.
\end{proof}

We can prove the affine counting result. 

\begin{proof}[Proof of Theorem \ref{thm: affine counting}]
By the previous result, there is a polynomial $g$ not divisible by $f$ vanishing at all $\FF_q[t]$-points of $f=0$ of logarithmic height less than $\ell$ and of degree
\[
\leq_\epsilon q^{1+\epsilon} q^{\frac{\ell-1}{d}}\frac{d^{[5,1,1]}(\log_q ||f_d||+d(\ell-1)) + d^{[7, \frac{11}{3}, 3]}}{||f_d||^{\frac{1}{d^2}}} + q^{\epsilon}(\ell-1) + q^{1+\epsilon}d^{[7, \frac{11}{3}, 3]}.
\]
Use the fact that $\frac{\log_q ||f_d||}{||f_d||^{1/d^2}} = O(q^\epsilon d^2)$ to get 
\[
\deg g\leq_\epsilon q^{1+\epsilon}q^{\frac{\ell-1}{d}} d^{[6,2,2]} (d^{[1, \frac{5}{3}, 1]}+\ell).
\]
Apply B\'ezout to conclude.
\end{proof}

\section{Dimension growth for hypersurfaces}

In this section we prove our dimension growth result. We need a projection lemma on curves in $\AA^3$, which will be proven later.

\begin{lemma}\label{thm: projecting curves in A3}
Let $C$ be a curve in $\AA^3$ of degree $d$. Then there exists a curve $C'$ in $\AA^2$ birational to $C$ and of the same degree such that
\[
N(C; \ell) \leq  N(C'; \ell + 2\log_qd)+d^2.
\]
\end{lemma}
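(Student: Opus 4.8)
The plan is to realize $C'$ as the image of $C$ under a generic linear projection $\pi\colon \AA^3\to\AA^2$ of the form $\pi(x,y,z)=(x,y,z)\mapsto(x+az,\,y+bz)$ (equivalently, project along a generic line direction, after relabelling coordinates so that $C$ is not a vertical line), with $a,b\in\FF_q[t]$ chosen of degree at most $\lfloor\log_q d\rfloor$. A projection is birational onto its image precisely when the generic fibre is a single point, and the locus of "bad" directions — those for which two distinct points of $C$ (or a tangent direction) collapse — is cut out by a nonzero polynomial of degree $O(d^2)$ in the direction parameters, coming from the secant/tangent variety of $C$. Since there are plenty of pairs $(a,b)$ with $|a|,|b|\le d$ (namely of order $q^{2\log_q d}=d^2$ many, once we pick representatives of small degree), and the bad locus meets this box in few points, we can choose $(a,b)$ avoiding it; this is the one place where one must check there are enough choices, but the count $d^2$ suffices comfortably. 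The image curve $C'=\pi(C)\subseteq\AA^2$ is then birational to $C$, and a projection does not raise the degree, so $\deg C'\le\deg C=d$; in fact birationality forces equality since the degree of $C'$ is the number of intersection points with a generic line, which pulls back to a generic plane section of $C$.

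Next I would track heights. If $P=(x,y,z)$ is an $\FF_q[t]$-point on $C$ with $h(P)<\ell$, then $\pi(P)=(x+az,\,y+bz)$ has coordinates of degree at most $\max(\deg x,\deg a+\deg z)\le \ell-1+\lfloor\log_q d\rfloor$, and similarly for the second coordinate, so $h(\pi(P))<\ell+\lfloor\log_q d\rfloor\le \ell+\log_q d$. The only remaining issue is that $\pi$ restricted to the $\FF_q[t]$-points of $C$ need not be injective even though it is birational: the birational inverse is defined off a proper closed (hence finite) subset $Z\subseteq C'$, and two points of $C$ can map to the same point of $Z$. But $Z$ is a finite set, and its size is bounded by the degree of the divisor where the map $C\dashrightarrow C'$ fails to be an isomorphism; by Bézout applied to the defining equations of $C$ and the exceptional locus this is $O(d^2)$, and absorbing constants one gets at most $d^2$ such points. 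Each fibre over such a point has size at most $\deg C'\le d$ — actually for the bound as stated it is cleaner to note that the non-injectivity contributes at most $d^2$ extra points total: points of $C\setminus\pi^{-1}(Z)$ are counted injectively inside $N(C';\ell+\log_q d)$, and $\#\pi^{-1}(Z)\cap C(\FF_q[t])$ with the given height bound is at most $d^2$ (say, $\le d\cdot\#Z$ with $\#Z\le d$). Hence
\[
N(C;\ell)\le N(C';\ell+2\log_q d)+d^2,
\]
where I have been generous with the additive logarithmic shift (the statement allows $2\log_q d$, which easily covers $\lfloor\log_q d\rfloor$).

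The main obstacle is the bookkeeping around birationality versus injectivity on rational points: one must argue carefully that only finitely many — and quantitatively $O(d^2)$ many — $\FF_q[t]$-points of $C$ lie over the exceptional locus of the projection, and that the relevant Bézout-type bound really gives a clean $d^2$ after choosing the generic direction. The genericity argument (enough small-degree directions $(a,b)$ to dodge the secant variety) is routine but needs the explicit count $\gtrsim d^2$ of admissible pairs against an $O(d^2)$-degree obstruction polynomial; this is where one invokes that $\FF_q[t]$ has enough elements of bounded degree, exactly as in the coordinate-transformation lemmas above. I defer the detailed verification of the degree bound on the exceptional locus to the promised later proof of this lemma.
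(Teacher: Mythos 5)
The genericity step is where your argument breaks down. You cannot choose the direction so that ``no two distinct points of $C$ collapse'': for a non-planar curve in $\AA^3$ the secant lines sweep out all of $\PP^3$ (the secant variety is three-dimensional), so every projection to the plane identifies some pairs of points and the image acquires singularities; what must be arranged is only that the projection is \emph{birational} onto its image, and the locus of directions violating this is a subtler object than ``the secant/tangent variety''. Moreover, even granting an obstruction hypersurface of degree $O(d^2)$ in the direction parameters, your box $|a|,|b|\le d$ is too small to dodge it: a nonzero polynomial of degree $Cd^2$ in two variables can vanish at every point of a $d\times d$ grid (a Schwartz--Zippel count allows up to $Cd^2\cdot d$ zeros there, which exceeds $d^2$), so ``the count $d^2$ suffices comfortably'' is not justified -- each coordinate must range over more values than the degree of the obstruction. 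The paper resolves both issues at once with a cheaper obstruction: it takes the cone $S$ over the projective closure $Z$ of $C$ with vertex a single point $q_0\in C$, an irreducible surface of degree at most $d-1$, and chooses a center of projection $p\in H_\infty\setminus S$ with $|p_i|<d$ (possible by Lemma \ref{thm: coordinate transformation} precisely because $\deg S<d$); avoiding this degree-$(d-1)$ cone already forces $\pi|_Z$ to be birational onto a plane curve of degree $d$, and the explicit formula $q\mapsto (p\cdot q)p-(p\cdot p)q$, quadratic in $p$, accounts for the exact height shift $2\log_q d$.

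The $+d^2$ term is also not secured by your bookkeeping. You bound the points lost to non-injectivity by $d\cdot\#Z$ with ``$\#Z\le d$'', but the locus where the birational inverse is undefined is (contained in) the singular locus of the plane curve $C'$, which can contain on the order of $d^2/2$ points; combined with fibres of size up to $d$ this gives $O(d^3)$, and ``B\'ezout \dots $O(d^2)$, absorbing constants'' does not yield the clean additive $d^2$ that the statement requires. The paper instead passes to the normalization $\tilde{C}\to C$ and uses Kunz, Thm.\ 17.7(b): the singular points of $C'$ have at most $(d-1)(d-2)$ preimages in total under $\tilde{C}\to C'$, hence at most $(d-1)(d-2)\le d^2$ points of $C$ can map to the bad locus. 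Finally, note that if you enlarge your box to $|a|,|b|\approx d^2$ so as to genuinely avoid a degree-$O(d^2)$ locus, the height shift becomes $2\log_q d+O(1)$, again slightly over the stated budget -- a further sign that the degree-$(d-1)$ cone, not a degree-$O(d^2)$ secant-type locus, is the right thing to avoid.
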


The main goal of this section is to prove the following version of the dimension growth conjecture for affine hypersurfaces, which will immediately imply Theorem \ref{thm: projective dimension growth}.

\begin{theorem}\label{thm: dimension growth affine hypersurfaces}
Let $n\geq 3$ and let $f\in \FF_q[t][x_1, ..., x_n]$ be a polynomial of degree $d\geq 64$ such that the degree $d$ part $f_d$ of $f$ is absolutely irreducible. Then
\[
N_\mathrm{aff}(f; \ell)\leq_{\epsilon, n}q^{9 +\epsilon}d^{O_n(1)} q^{\ell(n-2)}.
\]
If moreover, $\ell\geq 10$ satisfies $\frac{\ell-1}{\ell-9}\leq \frac{\sqrt{d}}{8}$ then
\[
N_\mathrm{aff}(f; \ell)\leq_{\epsilon, n} q^{1+\epsilon}d^{O_n(1)} q^{\ell(n-2)}.
\]
\end{theorem}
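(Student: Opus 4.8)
The plan is to run an induction on the dimension $n$, reducing the count on an $n$-dimensional affine hypersurface to counts on lower-dimensional varieties, with the base case $n=3$ handled by slicing into curves and applying Theorem \ref{thm: affine counting}. First I would set up the base case $n=3$: for each value $a\in\FF_q[t]_{<\ell}$ of one coordinate (say $x_3$), the fibre $f(x_1,x_2,a)=0$ is a plane curve of degree at most $d$ in $\AA^2$. When this curve is still absolutely irreducible (or at least has an irreducible component of large degree), Theorem \ref{thm: affine counting} gives $N_\mathrm{aff}(\,\cdot\,;\ell)\leq_\epsilon q^{1+\epsilon}d^{[7,3,3]}q^{(\ell-1)/d}(\ell+d^{[1,5/3,1]})$ on that fibre; summing over the $q^\ell$ values of $a$ gives roughly $q^{\ell}\cdot q^{1+\epsilon}d^{O(1)}q^{(\ell-1)/d}(\ell+d^{O(1)})$. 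Since $d\geq 64$, the factor $q^{(\ell-1)/d}(\ell+d^{O(1)})$ needs to be absorbed into $q^{\ell(n-2)}=q^{\ell}$ times lower-order terms — this is where the hypotheses $d\geq 64$ and, in the sharp regime, $\frac{\ell-1}{\ell-9}\leq\frac{\sqrt d}{8}$ enter, forcing $\frac{\ell-1}{d}$ to be small enough that $q^{(\ell-1)/d}$ is swallowed. The bad fibres (where the plane curve degenerates — becomes reducible with only small-degree components, or drops dimension) must be controlled separately: the set of degenerate values of $a$ is cut out by a resultant/discriminant-type polynomial in $a$ of degree $d^{O(1)}$, hence there are at most $d^{O(1)}$ such values contributing to the relevant bound; on those fibres one either bounds the curve crudely or uses that $f_d$ absolutely irreducible prevents $f$ from being a product of low-degree factors generically. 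The factor $q^9$ in the first bound traces back exactly to the $d^8=d^{[8,\cdot,\cdot]}$ in small characteristic: the auxiliary curves $C'$ produced by Lemma \ref{thm: projecting curves in A3} can have degree comparable to $|p|$ for the primes $p$ used, well beyond the characteristic, so we are always in the ``small characteristic'' column for those curves — that is why only the weaker $q^9$ survives in general and the sharp $q^1$ requires the extra smallness hypothesis on $\ell$.

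For the inductive step $n\to n+1$ with $n\geq 3$, I would fix a coordinate, say $x_{n+1}$, and for each $a\in\FF_q[t]_{<\ell}$ consider the fibre $f(x_1,\dots,x_n,a)=0$ in $\AA^n$. Generically this is a hypersurface of degree $d$ whose top-degree part is still absolutely irreducible (the top-degree part of $f(x_1,\dots,x_n,a)$ is $f_d(x_1,\dots,x_n,0)$ up to lower-order corrections — one must be a little careful, possibly after a generic linear change of coordinates mixing $x_{n+1}$ into the others so that $f_d$ remains absolutely irreducible when restricted), so the induction hypothesis gives $N_\mathrm{aff}(\,\cdot\,;\ell)\leq_{\epsilon,n}q^{[9,1]+\epsilon}d^{O_n(1)}q^{\ell(n-2)}$ on that fibre; summing over the $q^\ell$ values of $a$ multiplies by $q^\ell$ and yields $q^{[9,1]+\epsilon}d^{O_n(1)}q^{\ell(n-1)}$, which is the claimed bound with $n$ replaced by $n+1$. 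The exceptional fibres — where $f(x_1,\dots,x_n,a)$ fails to have absolutely irreducible top-degree part, or where $f$ itself vanishes identically in those coordinates — again form a set of size $d^{O_n(1)}$ cut out by Noether-type polynomials (as in Lemma \ref{thm: bound b(f)}) or resultants in the parameter $a$; on each such fibre one applies a trivial bound $N_\mathrm{aff}\leq q^{\ell n}$... no, that is too weak, so instead one projects the exceptional locus to lower dimension and uses induction there, or uses that such fibres are themselves unions of $O_n(1)$ hypersurfaces of smaller degree or dimension. A clean way to package this: the exceptional values of $a$ lie on a proper subvariety of $\AA^1$, hence number $\leq d^{O_n(1)}$; on those, crudely bound the fibre-count by the full $q^{\ell n}$ — but that destroys the bound, so one really does need the exceptional fibres to contribute at most $q^{\ell(n-1)}$ times a $d^{O_n(1)}$ factor, which follows by recognizing each exceptional fibre as contained in a bounded union of lower-dimensional affine varieties and invoking the (standard, lower-dimensional) part of dimension growth inductively.

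The main obstacle I anticipate is precisely the bookkeeping of the exceptional/degenerate fibres in the inductive step: ensuring that after restricting $x_{n+1}=a$ the hypothesis ``$f_d$ absolutely irreducible'' is inherited for all but $d^{O_n(1)}$ values of $a$, and that the remaining values contribute an acceptable amount. This is handled in \cite[Sec.\,5]{CCDN} in characteristic zero by a combination of generic linear changes of variables (to put $f_d$ in a good position with respect to the projection) and effective Noether irreducibility (Lemma \ref{thm: bound b(f)} here) to bound the number of bad slices; the positive-characteristic version should go through because all the irreducibility estimates we need — Cafure–Matera point counts and the Kaltofen/Ruppert Noether polynomials — have already been imported in Section 2. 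A secondary obstacle, specific to the $\FF_q[t]$ setting and the reason for the two-tier statement, is tracking the powers of $q$: every application of Theorem \ref{thm: affine counting} in small characteristic costs $q^8$-worth of $d$-dependence which converts to an extra $q^1$ when the auxiliary-curve degrees exceed the characteristic (always the case here for the curves from Lemma \ref{thm: projecting curves in A3}), giving the ambient $q^9$; only by additionally forcing $q^{(\ell-1)/d}\le O(1)$ via $\frac{\ell-1}{\ell-9}\le\frac{\sqrt d}{8}$ — which makes $(\ell-1)/d$ small enough that we land in a regime where the $d^8$ term is not the binding one — does the $q^1$ bound become available. I would prove the general bound first unconditionally, then revisit the base case and the first inductive step under the extra hypothesis on $\ell$ to extract the sharp version.
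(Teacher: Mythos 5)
The decisive gap is in your base case $n=3$. Slicing the surface $f=0$ into the plane curves $f(x_1,x_2,a)=0$ and applying Theorem \ref{thm: affine counting} to each fibre gives, after summing over the $\sim q^{\ell}$ values of $a$, a bound of the shape $q^{1+\epsilon}d^{O(1)}q^{\ell}\cdot q^{(\ell-1)/d}\,\ell$, and the factor $q^{(\ell-1)/d}\ell$ is \emph{not} absorbable: for fixed $q$ and $d$ it tends to infinity with $\ell$, whereas the target is $q^{9+\epsilon}d^{O(1)}q^{\ell}$ uniformly in $\ell$. Your claim that the hypotheses $d\geq 64$ and $\frac{\ell-1}{\ell-9}\leq\frac{\sqrt d}{8}$ force $q^{(\ell-1)/d}$ to be bounded is false — for any fixed $d\geq 65$ that condition holds for \emph{all} large $\ell$ (as the paper's remark notes), while $q^{(\ell-1)/d}\to\infty$. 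This is exactly the reason dimension growth for surfaces is not a consequence of curve counting fibre by fibre. The paper instead runs the determinant method at the level of the surface: Lemma \ref{thm: main lemma affine counting} (using $b(f)\le b(f_d)$ and Lemma \ref{thm: bound b(f)}) produces one auxiliary polynomial $g$ of degree $\leq_\epsilon q^{1+\epsilon}d^{7}q^{(\ell-1)/\sqrt d}$ vanishing on all points of height $<\ell$, and one then counts points on the curve $X\cap V(g)$ by splitting its components into lines (handled via $N(f_d;k)$ and a partial summation), components of degree between $2$ and $\ell$ (handled by Theorem \ref{thm: affine counting}), and components of degree $>\ell$. It is the last class that produces the $q^{9+\epsilon}$ — roughly an eighth power of the degree bound for $g$, i.e.\ of $q^{1+\epsilon}q^{(\ell-1)/\sqrt d}$ — and the condition $\frac{\ell-1}{\ell-9}\leq\frac{\sqrt d}{8}$ is precisely what lets one trade $q^{8+8(\ell-1)/\sqrt d}$ for $q^{\ell}$ to recover the sharp $q^{1+\epsilon}$ bound; it has nothing to do with bounding $q^{(\ell-1)/d}$ by a constant, and the $q^9$ does not come from Lemma \ref{thm: projecting curves in A3} as you suggest.

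Your inductive step is closer to the paper, but your bookkeeping of ``exceptional fibres'' is miscalibrated. If you slice by $x_{n+1}=a$, the degree-$d$ part of $f(x_1,\dots,x_n,a)$ is $f_d(x_1,\dots,x_n,0)$, which does not depend on $a$ at all; so there is no dichotomy ``good for all but $d^{O_n(1)}$ values of $a$'' — either every fibre is good or none is, and the latter can happen. The paper resolves this by choosing the slicing direction once and for all: Lemma \ref{thm: existence good hyerplane} (Noether forms plus Bertini) gives a nonzero form $F$ of degree $\leq 12nd^{7}$ detecting bad hyperplane sections of $V(f_d)$, and Lemma \ref{thm: coordinate transformation} produces $A=(a_1:\dots:a_n)$ of height $\leq 12nd^{7}$ with $F(A)\neq 0$; one then counts fibres of the linear form $\sum_i a_ix_i=\alpha$ over the $\leq q^{\ell}\cdot 12nd^{7}$ relevant values of $\alpha$, and \emph{every} fibre has absolutely irreducible top-degree part, so the induction hypothesis applies with no exceptional fibres to control. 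Your hedge about ``a generic linear change of coordinates'' points in this direction, but without the effective small-height choice of the linear form (needed so that heights only shift by $\log_q(12nd^{7})$) and without recognizing that a single good direction makes all slices good, the step as written does not close; and in any case the argument cannot be completed until the base case is replaced by an argument of the auxiliary-hypersurface type above.
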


\paragraph{Dimension growth for surfaces.} We prove dimension growth by induction on the dimension of our hypersurface, with the base case being surfaces.

\begin{lemma}
Let $f\in \FF_q[t][x_1, x_2, x_3]$ be a degree $d\geq 3$ polynomial whose highest degree part $f_d$ is irreducible and let $I$ be a set of lines in $\AA^3$ lying on the surface $X$ defined by $f$. Then, we have
\[
N(X\cap (\cup_{L\in I}L); l)\leq_{\epsilon} q^{1+\epsilon}d^{O(1)} q^{\ell} + \#I.
\]
\end{lemma}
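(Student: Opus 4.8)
The strategy is to parametrize the lines in $I$ by (portions of) a variety of bounded degree, count points on that parameter space, and then bound the number of $\FF_q[t]$-points of small height lying on each individual line, summing over the parametrizing space. First I would recall the classical fact that the lines on the surface $X = V(f)$ in $\AA^3$ are parametrized by a closed subscheme $Y$ of the Grassmannian of lines in $\PP^3$ (or, more concretely, by an affine variety obtained from writing a general line $L_{a,b}$ as $\{(s, a_1 s + b_1, a_2 s + b_2) : s\}$ together with the chart swaps, plus the "vertical" lines); substituting the parametrization into $f$ and clearing coefficients in $s$ gives $d+1$ polynomial equations in the parameters $(a,b)$, each of degree $O(d)$, cutting out a variety $F$ of dimension at most $1$ by the standard fact that a smooth surface of degree $\geq 3$ contains finitely many lines, and more generally a surface whose highest-degree part $f_d$ is irreducible of degree $d\geq 3$ contains at most $O(d^2)$ lines unless it is ruled — but here the irreducibility of $f_d$ rules out the cone/cylinder cases, so $F$ has dimension $\leq 1$ and degree $d^{O(1)}$. (If $X$ contains infinitely many lines, $f_d$ reducible or $f_d$ defines a cone, which is excluded.)

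Now each line $L\in I$ contributing an $\FF_q[t]$-point of logarithmic height $<\ell$ must itself be defined over $\FF_q[t]$ with small-height parameters — more precisely, if $L$ contains two distinct $\FF_q[t]$-points of height $<\ell$ then $L$ has a defining parametrization with coefficients of bounded height and contains at most... no: a line over $\FF_q(t)$ can contain up to $q^\ell$ points of logarithmic height $<\ell$ (the affine line $\FF_q[t]_{<\ell}$ has exactly $q^\ell$ elements), and this is exactly the $q^\ell$ in the target bound. So the key split is: lines containing $0$ or $1$ relevant point contribute at most $\#I$ in total; lines containing $\geq 2$ relevant points have a bounded-height parametrization, hence correspond to $\FF_q[t]$-points on $F$ of logarithmic height $O_n(\ell)$ (the height of the parametrization is controlled by the heights of two points on the line via Cramer's rule, losing only an additive $d^{O(1)}$). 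Since $\dim F\leq 1$ and $\deg F = d^{O(1)}$, either $F$ is finite — then there are at most $d^{O(1)}$ such lines, each contributing $\leq q^\ell$ points, giving $d^{O(1)}q^\ell$ — or $F$ has a one-dimensional component $C$, and we bound $N(C;\,O_n(\ell))$ using the projective/affine curve counting results (Theorem~\ref{thm: projective counting} or its affine consequence Theorem~\ref{thm: affine counting}), which gives $\leq_\epsilon q^{1+\epsilon}d^{O(1)}q^{O_n(\ell)/d}$. The exponent $O_n(\ell)/d$ is the issue: a priori the height blow-up is $c\ell$ for a constant $c$, and after dividing by $d$ we only get $q^{c\ell/d}$, not $q^\ell$ — but in fact the parametrization height is $\ell + d^{O(1)}$, not $c\ell$, so $N(C;\ell + d^{O(1)})\leq_\epsilon q^{1+\epsilon}d^{O(1)}q^{\ell/d}\cdot q^{d^{O(1)}/d} = q^{1+\epsilon}d^{O(1)}q^{\ell/d}$, comfortably below $q^\ell$. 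Multiplying the number of such lines (at most $q^{\ell/d}d^{O(1)}$, say) by $q^\ell$ points per line would overshoot, so instead I count points directly: each point of $X\cap(\cup L)$ determines the lines through it, and one sums $q^\ell$ (points per line) over the $d^{O(1)}q^{\ell/d}$ relevant lines only if those lines are genuinely distinct — better to observe that a point lying on the union lies on some line $L$ whose parameter is a height-$\leq \ell + d^{O(1)}$ point of $F$, and the fiber of "union of lines $\to$ parameter" over a height-$<\ell$ point is the line, with $\leq q^\ell$ choices; so $N(X\cap\cup L;\ell)\leq q^\ell\cdot N(F; \ell+d^{O(1)}) + \#I \leq_\epsilon q^{1+\epsilon}d^{O(1)}q^{\ell}\cdot q^{\ell/d} + \#I$. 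That extra $q^{\ell/d}$ is not allowed either. The fix, which is the real content, is that distinct lines meet in at most one point, so $\sum_{L} q^\ell$ double-counts nothing essential, but one should instead use the trivial bound $\#I \le$ (number of relevant lines) only when lines have $\ge 2$ points, and note these lie on a curve of degree $d^{O(1)}$, so their number is $O(d^{O(1)} \cdot q^{\ell/d})$...

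Here is the main obstacle, and the clean resolution I would pursue: the correct accounting is to NOT multiply line-count by points-per-line, but to handle the two cases of Theorem~\ref{thm: affine counting} differently. When $F$ (restricted to relevant heights) is zero-dimensional we get $\le d^{O(1)}$ lines and $\le d^{O(1)} q^\ell$ points, fine. When $F$ has a curve component $C$, I do NOT count lines — instead I note that the locus $X \cap (\bigcup_{L \in I} L)$, for those lines whose parameter lies on $C$, is itself swept out by a one-parameter family of lines, hence is (contained in) a surface — in fact an irreducible ruled surface $Z$ of degree $d^{O(1)}$ ruled by $C$; then $N(Z; \ell) \le_\epsilon q^{1+\epsilon} d^{O(1)} q^{\ell}$ because a degree-$e$ ruled surface over a function field has at most $O_e(q^{\ell \cdot 2})$... no — a ruled surface is $2$-dimensional so the dimension-growth heuristic wants $q^{2\ell}$, still too big. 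So the genuinely correct statement must be that $X \cap (\bigcup L)$ has dimension $\le 1$: since $X$ is a surface and $f_d$ is irreducible, $X$ is not ruled by lines, so $\bigcup_{L \in I} L$, being a union of lines inside $X$, cannot be $2$-dimensional, hence $\overline{\bigcup_{L\in I}L}$ is a finite union of lines — \emph{i.e. $I$ is finite of size $d^{O(1)}$} — and then the bound is immediate: $N(X\cap \bigcup L;\ell) \le \#I \cdot q^\ell + \#I \le_\epsilon d^{O(1)} q^{1+\epsilon} q^\ell + \#I$. I expect the actual proof to run exactly this way: the hard input is the (standard, char-independent once $f_d$ is irreducible of degree $\ge 3$) fact that a surface in $\AA^3$ whose leading form is irreducible of degree $\ge 3$ contains only finitely many lines, with an effective bound $d^{O(1)}$ on their number, obtained via the degree-$O(d)$ parameter equations cutting out a finite scheme of degree $d^{O(1)}$ by Bézout; and the only subtlety is making this effective and characteristic-free, for which I would cite or adapt the Cafure–Matera type point-counting / the argument in \cite{CCDN} used for the analogous statement in characteristic zero. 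Thus $\#I = d^{O(1)}$, each line carries $\le q^\ell$ points of logarithmic height $<\ell$, and the claimed inequality $N(X\cap(\cup_{L\in I}L);\ell)\le_\epsilon q^{1+\epsilon}d^{O(1)}q^\ell + \#I$ follows, with room to spare.
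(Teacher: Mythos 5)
There is a genuine gap: your final resolution rests on the claim that a surface in $\AA^3$ whose leading form $f_d$ is irreducible of degree $\geq 3$ contains only finitely many lines, so that $\#I = d^{O(1)}$. This is false, and your earlier remark that irreducibility of $f_d$ ``rules out the cone/cylinder cases'' is exactly where it goes wrong: if $f$ is itself homogeneous of degree $d$ with $f=f_d$ irreducible, then $X$ is the affine cone over an irreducible plane curve and contains a one-parameter family of lines through the origin; more generally any irreducible ruled surface whose hyperplane section at infinity is irreducible (a generic choice of the plane at infinity, by Bertini) satisfies all hypotheses of the lemma and carries infinitely many lines. This is why the lemma's conclusion keeps the separate term $\#I$ and why the statement cannot be reduced to ``finitely many lines times $q^\ell$ points each.'' Your intermediate worry about multiplying the number of lines by $q^\ell$ points per line was the right instinct, but the fix you settled on removes the actual difficulty by assuming it away.

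The paper's proof handles the infinite family correctly by grouping lines according to their \emph{direction} rather than parametrizing them in a Grassmannian. Splitting $I$ into lines with at most one point of height $<\ell$ (contributing $\leq \#I$) and lines $L=a+v\FF_q(t)$ with at least two such points, one observes that then $h(v)<\ell$, that $N_\mathrm{aff}(L;\ell)=q^{\ell-h(v)}$, and that the direction satisfies $f_d(v)=0$; moreover at most $d(d-1)$ such lines share a given primitive direction $v$, by B\'ezout applied to $f$ and the directional derivative $D_vf$ on a generic plane. The directions of height $<k$ are then counted by Theorem \ref{thm: projective counting} applied to the plane curve $f_d=0$, giving $\leq_\epsilon q^{1+\epsilon}d^8q^{2(k-1)/d}$, and a partial summation of $\sum_i n_i q^{\ell-i}$, convergent because $d\geq 3$ makes $1-2/d>0$, yields $q^{1+\epsilon}d^{10}q^\ell$. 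So the essential inputs you are missing are the direction-counting reduction to the projective curve $f_d=0$ and the partial-summation step that converts the $q^{2(\ell-1)/d}$ count of directions into a $q^\ell$ bound on points; without them (or some substitute) the case of a surface ruled by lines is not covered by your argument.
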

\begin{proof}
Define $I_1=\{L\in I\mid N_\mathrm{aff}(L; l)\leq 1\}$ and $I_2=\{L\in I\mid N_\mathrm{aff}(L; l)\geq 2\}$. If $L\in I_2$ then there exist $a,v\in \FF_q[t]^3$ such that $h(a)<\ell$, $v$ is primitive and $L=a+v\FF_q(t)$. Since $L\in I_2$ we see that $h(v)<\ell$ and hence
\[
N_\mathrm{aff}(L; l) =q^{\ell-h(v)}.
\]
Defining the polynomial $g(x)=f(x+a)$ we obtain for any $\lambda\in \FF_q(t)$
\[
0=f(\lambda v+a) = g(\lambda v) = \sum_{i=0}^d \lambda^ig_i(v).
\]
Therefore $f_d(v)=g_d(v)=0$. Note that the line $L$ intersects a generic plane $H$ in $\AA^3$ in a point $b$ which is a zero of both $f$ and the directional derivative $D_vf$. In particular, for a primitive $v\in \FF_q[t]^3$, the number of $L\in I_2$ with direction $v$ is at most $d(d-1)$ by B\'ezout's theorem. Define
\[
A_i = \{v\in \PP^2(\FF_q(t)) \mid f_d(v) = 0, h(v)=i\}, n_i=\#A_i.
\]
Then by Theorem \ref{thm: projective counting}
\[
\sum_{i=0}^{k-1}n_i = N(f_d; k)\leq_{\epsilon}q^{1+\epsilon}d^{8} q^{\frac{2(k-1)}{d}}.
\]
Thus $N(X\cap (\cup_{L\in I}; \ell))\leq \#I_1 + d(d-1)\sum_{i=0}^{l-1}n_iq^{\ell-i}$. To estimate this sum, apply partial summation to get
\begin{align*}
\sum_{i=0}^{\ell-1}n_iq^{l-i} &\leq q\sum_{i=0}^{\ell-1}n_i + \sum_{j=0}^{\ell-2}(q^{\ell-j} - q^{\ell-j-1})\sum_{i=0}^j n_i \\
&\leq_{\epsilon} q^{2+\epsilon}d^{8} q^{\frac{2(\ell-1)}{d}} + q^{2+\epsilon}d^{8} q^{\ell-1} \frac{q-1}{q}\sum_{j=0}^{\ell-2}\frac{1}{q^{j\left(1-\frac{2}{d}\right)}}.
\end{align*}
As $d\geq 3$, we have $\sum_{j=0}^{\ell-2}(1/q^{1-2/d})^j\leq \sum_{j=0}^{\infty}(1/q^{1-2/d})^j=1+O(q^\epsilon)$. Putting this together gives
\[
N(X\cap(\cup_{L\in I}L); \ell)\leq_{\epsilon} q^{1+\epsilon}d^{10} q^{\ell} + \# I. \qedhere
\]
\end{proof}

\begin{proposition}\label{thm: dimension growth for affine surfaces} Let $f\in \FF_q[t][x_1, x_2, x_3]$ be of degree $d\geq 64$ and suppose that $f_d$ is absolutely irreducible. Let $X$ be the surface in $\AA^3$ defined by $f$. Then
\[
N_\mathrm{aff}(X; \ell)\leq_{\epsilon} q^{9 +\epsilon} d^{O(1)} q^{\ell}.
\]
Moreover, for $\ell\geq 10$ satisfying $\frac{\ell-1}{\ell-9}\leq \frac{\sqrt{d}}{8}$ we have
\[
N_\mathrm{aff}(X; \ell)\leq_{\epsilon} q^{1+\epsilon}d^{O(1)}q^{\ell}.
\]
\end{proposition}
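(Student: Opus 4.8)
The plan is to use the determinant method via Theorem~\ref{thm: main theorem} applied to the surface $X$, combined with the preceding lemma to handle the contribution of lines. Let $S$ be the set of $\FF_q[t]$-points on $X$ of logarithmic height less than $\ell$. First I would apply Theorem~\ref{thm: main theorem} to $f$ (with $n=2$, so that $X\subseteq\PP^3$ after homogenizing, or rather working affinely as in Lemma~\ref{thm: main lemma affine counting}) to produce an auxiliary hypersurface $g$ of controlled degree $M$, not divisible by $f$, vanishing on all of $S$. The intersection $X\cap V(g)$ is then a curve of degree at most $dM$ on $X$. The key point is that $M$ is polynomial in $d$ and involves a factor $q^{(n+1)(\ell-1)/(nd^{1/n})}=q^{3(\ell-1)/(2\sqrt d)}$, which under the hypothesis $\frac{\ell-1}{\ell-9}\le\frac{\sqrt d}{8}$ (or more crudely $\ell-1\le\frac{\sqrt d}{8}\cdot\text{something}$) is bounded by roughly $q^{O(\ell)}$ or even a constant times a small power of $q$, depending on which regime we are in.

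Next I would stratify $X\cap V(g)$ into its irreducible components. Components that are lines contribute via the previous lemma: their union meets $S$ in at most $q^{1+\epsilon}d^{O(1)}q^\ell+\#I$ points, and $\#I$ is bounded by the number of components, hence by $dM=d^{O(1)}q^{O(\ell)}$. Components that are curves of degree $\ge 2$ I would handle by applying the affine curve count Theorem~\ref{thm: affine counting} to each — but these curves live in $\AA^3$, so first I project each to a birational plane curve of the same degree using Lemma~\ref{thm: projecting curves in A3}, at the cost of replacing $\ell$ by $\ell+2\log_q d$ and adding $d^2$. Then Theorem~\ref{thm: affine counting} gives, for a curve $C'$ of degree $e\ge 2$, a bound $\le_\epsilon q^{1+\epsilon}d^{[7,3,3]}q^{(\ell+2\log_qd-1)/e}(\ell+d^{[1,5/3,1]})$, and summing over the at most $dM/e$-many such components of degree $e$, the worst case $e=2$ dominates and yields $q^{O(1)}d^{O(1)}q^{\ell/2}\cdot(\text{poly in }\ell,d)$ times $M$. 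Combining all contributions and using that in the first (unconditional) regime the $q^{3(\ell-1)/(2\sqrt d)}$ factor is at most $q^{O(\ell/\sqrt d)}\le q^{O(\ell)}$ while the $q^8$ from the worst $d^8$-regime of Theorem~\ref{thm: affine counting} combines with the $q^1$'s to give the stated $q^{9+\epsilon}$, gives the first bound; in the second regime the inequality $\frac{\ell-1}{\ell-9}\le\frac{\sqrt d}{8}$ is exactly what is needed to absorb $M$'s exponential factor against the $q^{\ell/2}$ savings from the degree-$\ge 2$ curves, collapsing the $q^9$ to $q^{1+\epsilon}$.

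I expect the main obstacle to be the careful bookkeeping of exponents of $q$: one must track $M$'s dependence on $\ell$ and $d$ through Theorem~\ref{thm: main theorem}, feed $\ell+2\log_q d$ into Theorem~\ref{thm: affine counting} for each projected curve, and check that summing $q^{(\ell+2\log_qd-1)/e}$ over components with the multiplicity $dM/e$ really is dominated by the $e=2$ term and does not blow up — this requires $d^{O(1)}q^{(\ell+2\log_q d-1)/2}\cdot M$ to stay below $q^{9+\epsilon}d^{O_n(1)}q^\ell$, i.e.\ that $M\cdot q^{-\ell/2}$ is polynomial in $d$ up to a bounded power of $q$, which is where the hypotheses on $d\ge 64$ (so $3/(2\sqrt d)<1/2$ hence $q^{3(\ell-1)/(2\sqrt d)}$ is absorbed by $q^{\ell/2}$ for $\ell$ not too small) and on $\frac{\ell-1}{\ell-9}\le\frac{\sqrt d}{8}$ enter decisively. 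The remaining step — verifying that lines contribute an acceptable amount — is immediate from the lemma once $\#I\le dM$ is in hand, and the coordinate-transformation subtleties are already absorbed into Theorem~\ref{thm: main theorem} and Lemma~\ref{thm: main lemma affine counting}.
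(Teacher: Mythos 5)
Your overall route coincides with the paper's: produce an auxiliary polynomial $g$ of degree $M\leq_\epsilon q^{1+\epsilon}d^{7}q^{(\ell-1)/\sqrt{d}}$ via Lemma~\ref{thm: main lemma affine counting} together with Lemma~\ref{thm: bound b(f)} (using $b(f)\leq b(f_d)$), decompose the curve $X\cap V(g)$ (total degree at most $d\deg g$) into irreducible components, treat the lines with the preceding lemma, and treat the remaining components with Theorem~\ref{thm: affine counting}, after passing to plane curves as in Lemma~\ref{thm: projecting curves in A3}. But the decisive bookkeeping step in your proposal is wrong: it is not the degree-$2$ components that dominate. Theorem~\ref{thm: affine counting} applied to a component of degree $e$ costs $q^{1+\epsilon}e^{7}q^{(\ell-1)/e}(\ell+e^{[1,5/3,1]})$, and $e$ can be as large as $d\deg g\approx q^{1+\epsilon}d^{8}q^{(\ell-1)/\sqrt{d}}$, a quantity that grows with $\ell$. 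For such components $q^{(\ell-1)/e}\leq q$, but the factor $e^{8}$ contributes on the order of $q^{8+\epsilon}d^{O(1)}q^{8(\ell-1)/\sqrt{d}}$, which for $d$ near $64$ is comparable to $q^{\ell}$ itself; by contrast the $e=2$ terms give only about $d^{O(1)}q^{O(1)}\ell\, q^{(\ell-1)/\sqrt{d}+(\ell-1)/2}$, which is harmless for any $d>4$. Consequently your proposed binding constraint, that $M q^{-\ell/2}$ be polynomial in $d$, is not the relevant one (it would only require $d>9$ and makes no use of $\frac{\ell-1}{\ell-9}\leq\frac{\sqrt{d}}{8}$), and your attribution of the exponent $9$ to ``a $q^{8}$ from the $d^{8}$-regime of Theorem~\ref{thm: affine counting}'' is also incorrect: that theorem only contributes $q^{1+\epsilon}$; the $q^{9}$ arises because the high-degree contribution is an eighth power of $d\deg g$, which itself carries a factor $q^{1+\epsilon}$.

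The paper's proof handles exactly this point by splitting the non-linear components at the threshold degree $\ell$: each component of degree in $(1,\ell]$ satisfies $N_\mathrm{aff}\leq_\epsilon q^{1+\epsilon}\ell\, q^{(\ell-1)/2}$ (its degree-power is at most $\ell^{O(1)}$ and is absorbed), and there are at most $d\deg g$ of them; for components of degree $>\ell$ one uses $q^{(\ell-1)/\deg C'_i}\leq q$ and $\ell+\deg C'_i\leq 2\deg C'_i$ to bound their total contribution by $q^{2+\epsilon}\sum_i(\deg C'_i)^{8}\leq q^{2+\epsilon}\bigl(\sum_i\deg C'_i\bigr)^{8}\leq_\epsilon q^{9+\epsilon}d^{O(1)}q^{8(\ell-1)/\sqrt{d}}$. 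This last term is what explains both hypotheses: $d\geq 64$ gives $8(\ell-1)/\sqrt{d}\leq\ell-1$, hence the unconditional bound with $q^{9+\epsilon}$, while $\frac{\ell-1}{\ell-9}\leq\frac{\sqrt{d}}{8}$, i.e.\ $8(\ell-1)/\sqrt{d}\leq\ell-9$, is exactly what allows the surplus powers of $q$ to be traded against $q^{\ell}$ to recover $q^{1+\epsilon}$. A further small correction: you quote the projective factor $q^{3(\ell-1)/(2\sqrt{d})}$ from Theorem~\ref{thm: main theorem}; one must use the affine refinement $q^{(\ell-1)/\sqrt{d}}$ of Lemma~\ref{thm: main lemma affine counting}, since after the eighth power the extra factor $3/2$ would already ruin the case $d=64$.
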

\begin{remark}
If one fixes a $d\geq 65$ then this condition above is automatically satisfied for all large $\ell$. Conversely, if we fix $\ell\geq 10$ then this condition is also automatically true for large $d$. 
\end{remark}
\begin{proof}
Since $f_d$ is absolutely irreducible we have $b(f)\leq b(f_d)$. Applying Lemma \ref{thm: bound b(f)} and \ref{thm: main lemma affine counting} gives a polynomial $g\in \FF_q[t][x_1, x_2, x_3]$ vanishing on all points of $X$ of logarithmic height at most $\ell$ and of degree $\leq_\epsilon q^{1+\epsilon} d^7 q^{\frac{\ell-1}{\sqrt{d}}}$. Let $\mathcal{C}$ be the (reduced) intersection of $X$ and $g=0$. Let $L_1, ..., L_k$ be the degree $1$ components of $\mathcal{C}$ and let $C_1, ..., C_m, C_1', ..., C_n'$ be the irreducible components of $\mathcal{C}$ with 
\begin{align*}
1<&\deg C_i\leq \ell, \quad  \text{ for all }i, \\
 \ell<&\deg C_i',\phantom{\leq \ell,} \quad \text{ for all }i.
\end{align*}
Note that
\[
k + \sum_i\deg C_i + \sum_i\deg C_i' \leq d\deg g.
\]
The total contribution to $N_\mathrm{aff}(X; \ell)$ coming from the lines $L_i$ is at most $\leq_\epsilon q^{1+\epsilon}d^{O(1)} q^{\ell}$. For the $C_i$ we have by Theorem \ref{thm: affine counting}
\[
N_\mathrm{aff}(C_i; \ell)\leq_\epsilon q^{1+\epsilon} \deg (C_i)^8 \ell q^{\frac{\ell-1}{\deg C_i}}.
\]
If $\deg C_i=2$ then this quantity is clearly $\leq_\epsilon q^{1+\epsilon}q^{\frac{\ell-1}{2}}\ell$. If $\deg C_i> 2$ then using the fact that $\ell^7q^{\frac{-\ell}{6}}=O(1)$ also gives this bound. Thus
\[
N_\mathrm{aff}(C_i; \ell)\leq_\epsilon q^{1+\epsilon}q^\frac{\ell-1}{2}\ell.
\]
Therefore the total contribution of the $C_i$ to $N_\mathrm{aff}(X; \ell)$ is certainly bounded as stated.

Lastly, to bound the contribution from the high degree components we have
\begin{align*}
\sum_i N_\mathrm{aff}(C_i'; \ell) &\leq_\epsilon q^{1+\epsilon}\sum_i \deg ( C_i')^{7}q^{\frac{\ell-1}{\deg C_i'}}(\ell+\deg C_i') \\
    &\leq_\epsilon q^{2+\epsilon}\sum_i \deg (C_i)'^{8} \leq_\epsilon  q^{2+\epsilon}\left(\sum_i \deg C_i'\right)^{8} \\
    &\leq_\epsilon q^{9 +\epsilon}d^{O(1)} q^{8\frac{\ell-1}{\sqrt{d}}}.
\end{align*}
This proves the first statement for $d\geq 64$. If we assume that $\ell\geq 10$ and $\frac{\ell-1}{\ell-9}\leq \frac{\sqrt{d}}{8}$ then 
\[
q^{9 +\epsilon}d^{O(1)}q^{8\frac{\ell-1}{\sqrt{d}}} \leq q^{1+\epsilon}d^{O(1)}q^{\ell},
\]
proving the second statement as well.
\end{proof}

\paragraph{The general case.} For the induction procedure, we consider shifts of a certain hyperplane to reduce to varieties of lower dimension. We first prove that we can find a sufficiently good hyperplane.

\begin{lemma}\label{thm: existence good hyerplane}
Let $n\geq 3$ and $f\in \FF_q[t][x_0, ..., x_n]$ be absolutely irreducible of degree $d\geq 2$ defining a hypersurface $X$. Then there exists a non-zero homogeneous $F\in \FF_q[t][y_0, ..., y_n]$ of degree at most $12(n+1)d^7$ such that for a point $A=(a_0:...:a_n)\in (\PP^n)^*$, we have that if $H_A\cap X$ is not geometrically integral, then $F(A)=0$. Here $H_A$ is the hyperplane in $(\PP^n)^*$ associated to $A$.
\end{lemma}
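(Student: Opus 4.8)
The plan is to realize the set of ``bad'' hyperplanes as contained in the zero locus of a restriction of the effective Noether forms to a universal hyperplane section, and to read off the degree bound from there. Work over $\FF_q(t)$ and let $a_0,\dots,a_n$ be the coordinates on $(\PP^n)^*$, so that $H_A=\{\sum_i a_ix_i=0\}$. For each $j\in\{0,\dots,n\}$, on the locus $a_j\neq 0$ one may eliminate $x_j$ along $H_A$; accordingly set
\[
\tilde f_j \;=\; a_j^{\,d}\,f\!\left(x_0,\dots,x_{j-1},\,-a_j^{-1}{\textstyle\sum_{i\neq j}}a_ix_i,\,x_{j+1},\dots,x_n\right)\in\FF_q[t][a_0,\dots,a_n][(x_i)_{i\neq j}],
\]
which is homogeneous of degree $d$ in the $n$ variables $(x_i)_{i\neq j}$ and each of whose coefficients as such a form is homogeneous of degree $d$ in $a_0,\dots,a_n$. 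For a point $A$ with $a_j\neq 0$, the specialization $\tilde f_j|_A$ is a nonzero scalar multiple of the restriction of $f$ to $H_A$ in the coordinates $(x_i)_{i\neq j}$ --- nonzero because $f$, being absolutely irreducible of degree $d\geq 2$, is not divisible by the linear form $\sum_i a_ix_i$ --- and therefore $H_A\cap X$ is geometrically integral if and only if the degree-$d$ form $\tilde f_j|_A$ is absolutely irreducible.

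Next I would invoke the effective Noether forms already used in Lemma \ref{thm: bound b(f)}: by \cite[Thm.\,7]{KALTOFEN1995} there is a finite family $(\Phi_i)_i$ of polynomials over $\ZZ$ of degree at most $12d^6$ such that a degree-$d$ form over any field fails to be absolutely irreducible precisely when all the $\Phi_i$ vanish on its coefficients. The crucial input is that the generic hyperplane section of $X$ is geometrically integral: since $X$ is geometrically integral of dimension $n-1\geq 2$ --- this is where the hypothesis $n\geq 3$ enters --- Bertini's irreducibility theorem applied to the generic hyperplane over the field $\FF_q(t)(a_0,\dots,a_n)$ shows that $\tilde f_j$ is absolutely irreducible over $\FF_q(t)(a_0,\dots,a_n)$. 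Hence the polynomials $\Phi_i(\mathrm{coeffs}(\tilde f_j))\in\FF_q[t][a_0,\dots,a_n]$ do not all vanish identically; fixing one of them that does not, call it $F_j$, we obtain a nonzero polynomial of degree at most $12d^6\cdot d=12d^7$ with the property that whenever $a_j\neq 0$ and $H_A\cap X$ is not geometrically integral, all the $\Phi_i(\mathrm{coeffs}(\tilde f_j|_A))$ vanish, so in particular $F_j(A)=0$.

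Finally, set $G=\prod_{j=0}^{n}F_j$, a nonzero polynomial of degree at most $12(n+1)d^7$. If $H_A\cap X$ is not geometrically integral, then choosing any $j$ with $a_j\neq 0$ gives $F_j(A)=0$, hence $G(A)=0$; thus $G$ vanishes on the bad locus $B\subseteq\AA^{n+1}$. Since $H_A$ depends on $A$ only up to a scalar, $B$ is a cone, so the ideal of its Zariski closure is homogeneous, and therefore every homogeneous component of $G$ also vanishes on $B$. Taking $F$ to be a nonzero homogeneous component of $G$ yields a nonzero homogeneous polynomial of degree at most $12(n+1)d^7$ vanishing at every $A$ for which $H_A\cap X$ is not geometrically integral, as required. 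The one genuinely non-formal step is the appeal to Bertini to ensure $F\neq 0$, that is, that the bad locus is a proper closed subset of $(\PP^n)^*$; this is precisely what forces $n\geq 3$, while the passage through the $n+1$ affine charts of $(\PP^n)^*$ is what produces the factor $n+1$ in the degree bound.
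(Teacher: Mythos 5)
Your proof is correct and follows essentially the same route as the paper: restrict $f$ to $H_A$ chart by chart, apply Kaltofen's effective Noether forms (degree $\leq 12d^6$) to the coefficients, which are of degree $d$ in the $a_i$, take the product over the $n+1$ charts to get degree $12(n+1)d^7$, and use Bertini (valid since $\dim X\geq 2$) to guarantee nonvanishing. Your treatment is in fact slightly more explicit than the paper's on two minor points --- choosing a $\Phi_i$ that is nonzero on the coefficients of the generic hyperplane section, and restoring homogeneity by clearing denominators with $a_j^d$ and passing to a homogeneous component via the cone structure of the bad locus --- but these are refinements of the same argument, not a different approach.
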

\begin{proof}
Let $A=(a_0:...:a_n)\in (\PP^n)^*$ and assume that $a_0=1$. Then $H_A\cap X$ is geometrically integral if and only if $f(-a_1x_1-...-a_nx_n, x_1, ..., x_n)$ is irreducible over $\overline{\FF_q(t)}$. By \cite[Thm.\,7]{KALTOFEN1995} there exists a Noether form $\Phi$ of degree at most $12d^6$ such that if $H_A\cap X$ is not geometrically integral then $\Phi$ applied to the coefficients of $f(-a_1x_1-...-a_nx_n, x_1, ..., x_n)$ is $0$. Let $F_0\in \FF_q[t][y_0, y_1, ..., y_n]$ be the homogenization of 
\[
\Phi(\operatorname{coeff} (f(-y_1x_1-...-y_nx_n, x_1, ..., x_n))),
\]
where $\operatorname{coeff}$ maps a polynomial in the $x_i$ to its vector of coefficients. Note that $\deg F_0\leq 12d^7$. In the same manner, we find $F_1, ..., F_n$ such that for $A=(a_0: ...: a_n)$ with $a_i\neq 0$ we have that if $H_A\cap X$ is not geometrically integral then $F_i(A)=0$. We define $F=\prod_i F_i$. Because $\dim X\geq 2$ and $X$ is geometrically integral, \cite[Thm.\,6.10]{bertinithm} implies that the generic hyperplane $H$ in $\PP^n$ defines a geometrically integral intersection $H\cap X$. Hence there is a hyperplane $H_A$ at which none of the $F_i$ vanish and so $F$ is non-zero.
\end{proof}

\begin{comment}
\begin{remark}
In large characteristic one may again replace the use of the Noether forms from \cite{KALTOFEN1995} by those of \cite{RuppertCrelle}. This will improve the dependence on $d$ in the dimension growth. Since we are only targeting polynomial growth of the form $d^{O_n(1)}$, having $\deg F$ polynomial in $d$ will be enough.
\end{remark}
\end{comment}

\begin{proof}[Proof of Theorem \ref{thm: dimension growth affine hypersurfaces}]
Let $X\subseteq \AA^n$ be a geometrically integral hypersurface described by a polynomial $f$ of degree $d\geq 64$ with $f_d$ absolutely irreducible. We induct on $n$, where the base case $n=3$ follows from Proposition \ref{thm: dimension growth for affine surfaces}. So let $n>3$. 

The polynomial $f_d$ defines a geometrically integral hypersurface in $\PP^{n-1}$, so by Lemma \ref{thm: existence good hyerplane} there exists a non-zero homogeneous $F\in \FF_q[t][y_1,..., y_n]$ such that if $F(A)\neq 0$ then $H_A\cap V(f_d)$ is geometrically integral. Applying Lemma \ref{thm: coordinate transformation} to $F$ gives a point $A=(a_1: ...: a_n)\in (\PP^{n-1})^*$ with $a_i\in \FF_q[t]$, $|a_i|\leq 12nd^7$ and $H_A\cap V(f)$ geometrically integral.

Now note that if $f(x_1, ..., x_n)=0$ with $\deg x_i<\ell$ then
\[
\deg\left(\sum_i a_ix_i\right) < \log_q(12nd^7)+\ell,
\]
and hence
\[
N(f; \ell)\leq \sum_{\deg \alpha < \ell+\log_q(12nd^7)} N\left(V\left(f, \sum_i a_ix_i=\alpha\right); \ell\right).
\]
The set $V(f)\cap \left(\sum_i a_ix_i=\alpha\right)$ defines a hypersurface in $\sum_i a_ix_i=\alpha$ and after a coordinate transformation is described by a polynomial $g\in \FF_q[t][x_1, ..., x_{n-1}]$ of degree $d$ with $g_d$ absolutely irreducible. The induction hypothesis proves the result.
\end{proof}

Finally, we treat dimension growth for projective hypersurfaces.

\begin{proof}[Proof of Theorem \ref{thm: projective dimension growth}]
Let $n\geq 3$ and let $X$ be an irreducible projective hypersurface in $\PP^n$ of degree $d\geq 64$ defined by an irreducible $f\in \FF_q[t][x_0, ..., x_n]$. If $f$ is absolutely irreducible then $f$ also defines an affine hypersurface in $\AA^{n+1}$ and we have
\[
N(f; \ell)\leq N_{\mathrm{aff}}(f; \ell).
\]
Then the dimension growth for affine hypersurfaces gives the result.

Suppose now that $f$ is not absolutely irreducible. By \cite[Sec.\,4]{Walsh} there is then a homogeneous polynomial $g\in \FF_q[t][x_0, ..., x_n]$ of degree $d$ coprime to $f$ and vanishing on all $\FF_q(t)$-points of $X$. Thus the $\FF_q[t]$-points of $X$ are contained in the $(n-2)$-dimensional variety $X\cap V(g)$ of degree $d^2$. By Lemma \ref{thm: trivial bound affine} below we get
\[
N(f; \ell)\leq N_\mathrm{aff}(X\cap V(g); \ell) \leq d^2q^{\ell(n-1)}. \qedhere
\]
\end{proof}

\begin{lemma}\label{thm: trivial bound affine}
Let $Y$ be a (reduced) variety in $\AA^n_{\overline{\FF_q(t)}}$ of dimension $m$ and degree $d$, defined over $\overline{\FF_q(t)}$. Then for any $\ell$
\[
N(Y; \ell)\leq dq^{\ell m}.
\]
\end{lemma}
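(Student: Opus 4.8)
The plan is to induct on the dimension $m$, using a generic linear projection to reduce the dimension by one. First I would handle the base case $m=0$: a zero-dimensional variety of degree $d$ consists of at most $d$ points (counted without multiplicity, since $Y$ is reduced), so $N(Y;\ell)\leq d = dq^{0\cdot \ell}$, as claimed. For the inductive step, suppose the bound holds for all varieties of dimension $m-1$ and let $Y\subseteq \AA^n$ be reduced of dimension $m$ and degree $d$. It suffices to treat the case where $Y$ is irreducible, since the degrees of the irreducible components sum to $d$ and the counting function is subadditive over them.

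For irreducible $Y$ of dimension $m\geq 1$, I would pick a coordinate $x_i$ which is non-constant on $Y$, so that the projection $\pi\colon Y\to \AA^{n-1}$ forgetting $x_i$ is a finite (dominant) map of degree $e\geq 1$ onto its image $Y'=\overline{\pi(Y)}$, which is an irreducible variety of dimension $m$ — wait, this does not drop the dimension. Instead I would project along a generic linear subspace, or more simply slice: for each value $\alpha\in\FF_q[t]$ with $\deg\alpha<\ell$, the fibre $Y\cap\{x_i=\alpha\}$ is a variety of dimension at most $m-1$. One must control its degree. The cleanest route is to note that for all but finitely many hyperplanes $x_i=\alpha$, the intersection $Y\cap\{x_i=\alpha\}$ has dimension $m-1$ and degree at most $d$ (by B\'ezout, as $Y$ has degree $d$ and the hyperplane has degree $1$); for the finitely many special values, one still has $\dim(Y\cap\{x_i=\alpha\})\leq m-1$ and degree $\leq d$ by the general bound on degrees of intersections with hyperplanes. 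Choosing $x_i$ to be a coordinate on which $Y$ is not constant (which exists, as $\dim Y\geq 1$), every $\FF_q[t]$-point of $Y$ of logarithmic height $<\ell$ lies in some $Y\cap\{x_i=\alpha\}$ with $\deg\alpha<\ell$, hence
\[
N(Y;\ell)\leq \sum_{\deg\alpha<\ell} N\bigl(Y\cap\{x_i=\alpha\};\ell\bigr)\leq q^{\ell}\cdot d\,q^{\ell(m-1)} = d\,q^{\ell m},
\]
where the middle inequality uses the induction hypothesis on each slice (of dimension $\leq m-1$ and degree $\leq d$) together with the fact that there are $q^\ell$ choices of $\alpha$.

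The main obstacle is the bookkeeping of degrees of the hyperplane slices: one needs that $\deg(Y\cap\{x_i=\alpha\})\leq \deg Y$ for every $\alpha$, not just generically. This follows from the standard fact that intersecting with a hyperplane does not increase the degree of the intersection once one defines degree correctly for possibly-reducible or lower-dimensional intersections — e.g.\ via the sum of degrees of the top-dimensional components of the scheme-theoretic intersection, together with the inequality $\deg(Y\cap H)\le\deg Y$ valid whenever $H$ does not contain a component of $Y$, which holds here since $Y$ is irreducible and $x_i$ is non-constant on $Y$. Since the statement only asserts an inequality and $Y$ is reduced, these classical intersection-theoretic facts suffice and no delicate multiplicity analysis is needed.
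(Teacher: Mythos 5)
Your proposal is correct and follows essentially the same route as the paper: induction on the dimension, reduction to irreducible components, slicing by a coordinate hyperplane $x_i=\alpha$ with $\deg\alpha<\ell$ (of which there are $q^\ell$), and a B\'ezout-type bound $\deg(Y\cap\{x_i=\alpha\})\leq\deg Y$ on each slice. The only difference is cosmetic: the paper slices each irreducible component by its own chosen coordinate rather than first invoking subadditivity, which amounts to the same argument.
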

\begin{proof}
We induct on $m$, the case $m=0$ being trivial. So let $m>0$ and let $Y_1, ..., Y_k$ be the irreducible components of $Y$. For every $Y_i$ there is a coordinate $x_{j(i)}$ such that $\dim(Y_i\cap V(x_{j(i)}-a))<\dim Y_i$. Hence
\begin{align*}
N(Y; \ell) \leq \sum_i N(Y_i; \ell) \leq \sum_i \sum_{\deg a< \ell} N\left( Y_i\cap V(x_{j(i)}-a); \ell\right) \leq dq^{\ell m},
\end{align*}
by induction.
\end{proof}

\paragraph{Projecting curves in $\AA^3$.} To conclude, we prove Lemma \ref{thm: projecting curves in A3}. 

\begin{proof}[Proof of Lemma \ref{thm: projecting curves in A3}.] If $C$ is contained in a plane $H$, then we may simply drop one of the coordinates to obtain an affine plane curve $C'$ of degree $d$, birational to $C$ such that
\[
N_\mathrm{aff}(C; \ell)\leq N_\mathrm{aff}(C; \ell).
\]
So assume that $C$ is not contained in any plane. Let $Z$ be the projective closure of $C$ in $\PP^3$. Denote by $H_\infty$ the plane at infinity and fix a point $q_0\in C$. The cone $S$ on $Z$ and $q_0$ is an irreducible surface of degree $\leq d-1$, since $C$ is not contained in a plane. Since $S$ does not contain $H_\infty$, we obtain via Lemma \ref{thm: coordinate transformation} a point $p=(p_1: p_2: p_3: 0)$ (represented by coprime $p_i\in \FF_q[t]$) in $H_\infty\setminus S$ with $|p_i|< d$. Let $H$ be the plane defined by $p_1x+p_2y+p_3z = 0$ in $\PP^3$ and consider the projection
\[
\pi: \PP^3 \to H: q\mapsto (p\cdot q)p - (p\cdot p)q.
\]
Since $p\not \in S$, the restriction $\pi|_Z$ is birational onto its image, and $\pi(Z)$ is a plane curve of degree $d$, see \cite[Sec.\,18]{harris}. If we work in $\AA^3$, the map
\[
\pi: \AA^3\to H: q\mapsto (p\cdot q)p - (p\cdot p)q
\]
is a rescaling of the projection above. Moreover, $\pi|_C$ is still birational onto its image, and after dropping a coordinate, $\pi(C)=C'$ is an affine plane curve of degree $d$. Let $\tilde{C}\to C$ be the normalization of $C$. Then $\tilde{C}\to C'$ is an isomorphism away from the singular points of $C'$ and by \cite[Thm.\,17.7(b)]{kunz} the singular points of $C'$ have at most $(d-1)(d-2)$ preimages under $\tilde{C}\to C'$. In particular, both these claims also apply to $C\to C'$. Finally, note that $h(\pi(q))\leq h(q)+2h(p)<h(q)+2\log_q d$ to conclude that
\[
N_\mathrm{aff}(C; \ell)\leq N_\mathrm{aff}(C'; \ell+2\log_q d) +d^2. \qedhere 
\]

\end{proof}

\bibliographystyle{alpha}
\bibliography{bibfile}

\def\cprime{$'$}
\begin{thebibliography}{CCDN19}

\bibitem[BHBS06]{Brow-Heath-Salb}
T.~D. Browning, D.~R. Heath-Brown, and P.~Salberger.
\newblock Counting rational points on algebraic varieties.
\newblock {\em Duke Math. J.}, 132(3):545--578, 2006.

\bibitem[BP89]{Bombieri-Pila}
E.~Bombieri and J.~Pila.
\newblock The number of integral points on arcs and ovals.
\newblock {\em Duke Math. J.}, 59(2):337--357, 1989.

\bibitem[BV83]{Bombi-Vaal}
E.~Bombieri and J.~Vaaler.
\newblock On {S}iegel's lemma.
\newblock {\em Invent. Math.}, 73(1):11--32, 1983.

\bibitem[CCDN19]{CCDN}
W.~{Castryck}, R.~{Cluckers}, P.~{Dittmann}, and K.~{Nguyen}.
\newblock {The dimension growth conjecture, polynomial in the degree and
  without logarithmic factors}.
\newblock {\em arXiv e-prints}, page arXiv:1904.13109, Apr 2019.

\bibitem[CFL19]{cluckers2019uniform}
R.~{Cluckers}, A.~{Forey}, and F.~{Loeser}.
\newblock {Uniform Yomdin-Gromov parametrizations and points of bounded height
  in valued fields}.
\newblock {\em arXiv e-prints}, page arXiv:1902.06589, February 2019.
\newblock To appear in Algebra and Number Theory.

\bibitem[CM06]{CafureMatera}
A.~Cafure and G.~Matera.
\newblock Improved explicit estimates on the number of solutions of equations
  over a finite field.
\newblock {\em Finite Fields and Their Applications}, 12:155--185, 2006.

\bibitem[EV05]{Ellenb-Venkatesh}
J.~Ellenberg and A.~Venkatesh.
\newblock On uniform bounds for rational points on nonrational curves.
\newblock {\em Int. Math. Res. Not.}, (35):2163--2181, 2005.

\bibitem[Gao01]{GAOpdes}
S.~Gao.
\newblock Factoring multivariate polynomials via partial differential
  equations.
\newblock {\em Mathematics of Computation}, 72, 05 2001.

\bibitem[Har95]{harris}
J.~Harris.
\newblock {\em Algebraic Geometry, A first course}.
\newblock Graduate texts in mathematics, Spring-Verlag, 1995.

\bibitem[HB83]{Heath-Brown-cubic}
D.~R. Heath-Brown.
\newblock Cubic forms in ten variables.
\newblock {\em Proc. London Math. Soc. (3)}, 47(2):225--257, 1983.

\bibitem[HB02]{Heath-Brown-Ann}
D.~R. Heath-Brown.
\newblock The density of rational points on curves and surfaces.
\newblock {\em Ann. of Math. (2)}, 155(2):553--595, 2002.

\bibitem[Jou83]{bertinithm}
J.P. Jouanolou.
\newblock {\em Th{\'e}or{\`e}mes de Bertini et applications}.
\newblock S{\'e}ries de math{\'e}matiques pures et appliqu{\'e}es.
  Universit{\'e} Louis Pasteur, 1983.

\bibitem[Kal95]{KALTOFEN1995}
E.~Kaltofen.
\newblock Effective {N}oether irreducibility forms and applications.
\newblock {\em J. Comput. System Sci.}, 50(2):274--295, 1995.
\newblock 23rd Symposium on the Theory of Computing (New Orleans, LA, 1991).

\bibitem[Kun05]{kunz}
E.~Kunz.
\newblock {\em Introduction to plane algebraic curves}.
\newblock Birkh\"{a}user Boston, 2005.

\bibitem[Rup86]{RuppertCrelle}
W.~Ruppert.
\newblock Reduzibilit\"{a}t ebener {Kurven}.
\newblock {\em J. reine angew. Math.}, 369:167--191, 1986.

\bibitem[Sal13]{Salberger-dgc}
P.~Salberger.
\newblock Counting rational points on projective varieties.
\newblock preprint from 2009, version from 2013.

\bibitem[Sed17]{sedunova}
A.~Sedunova.
\newblock On the bombieri-pila method over function fields.
\newblock {\em Acta Arith.}, 181(4):321–--331, 2017.

\bibitem[Ser89]{Serre-Mordell}
J.-P. Serre.
\newblock {\em Lectures on the {M}ordell-{W}eil theorem}.
\newblock Aspects of Mathematics, E15. Friedr. Vieweg \& Sohn, Braunschweig,
  1989.
\newblock Translated from the French and edited by Martin Brown from notes by
  Michel Waldschmidt.

\bibitem[Wal15]{Walsh}
M.~N. Walsh.
\newblock Bounded rational points on curves.
\newblock {\em Int. Math. Res. Not. IMRN}, (14):5644--5658, 2015.

\end{thebibliography}

\noindent \textsc{Section of Algebra, Department of Mathematics, KU Leuven}

\noindent \textsc{Celestijnenlaan 200B, 3001 Leuven (Heverlee), Belgium}

\noindent \vspace{-0.35cm}

\noindent \texttt{floris.vermeulen@kuleuven.be}

\noindent \texttt{https://sites.google.com/view/floris-vermeulen/homepage}

\end{document}